\newtheorem{remark}[theorem]{Remark}
\newcommand{\TheTitle}{Optimizing weighted ensemble sampling} 
\newcommand{\TheAuthors}{D. Aristoff and D. Zuckerman}
\headers{\TheTitle}{\TheAuthors}
\title{{Optimizing weighted ensemble sampling of steady states}\thanks{This 
work was supported by the
National Science Foundation 
via the awards
NSF-DMS-1522398 and NSF-DMS-1818726 and the National Institutes of Health via the award GM115805.}}
\author{
  David Aristoff\thanks{Colorado State University
    (\email{aristoff@math.colostate.edu}) }
  \and
  Daniel M.  Zuckerman\thanks{Oregon Health \& Science University (\email{zuckermd@ohsu.edu})}
}
\begin{document}

\maketitle

\begin{abstract}
We propose parameter optimization techniques for weighted ensemble sampling 
of Markov chains in the steady-state regime. Weighted ensemble 
consists of replicas of a Markov chain, each carrying a weight, 
that are periodically resampled according to their 
weights inside of each of a number of {bins} that partition state space. We derive, from first principles, strategies for optimizing the choices of weighted ensemble parameters, in particular the choice of 
bins and the number of replicas to maintain in each bin. In a simple numerical example, we 
compare our new strategies with more traditional ones
and with direct Monte Carlo.
\end{abstract}

\begin{keywords}
Markov chains, resampling, sequential Monte Carlo, weighted ensemble, molecular dynamics, reaction networks, steady state, coarse graining
\end{keywords}

\begin{AMS}
  65C05, 65C20, 65C40, 
  65Y05, 82C80
\end{AMS}

\maketitle
\section{Introduction}

Weighted ensemble is a Monte Carlo
method 
based on stratification and resampling, 
originally designed to solve 
problems in computational 
chemistry~\cite{bhatt2010steady,
chong2017path,jeremy,costaouec2013analysis,darve2013computing,
donovan2013efficient,huber1996weighted,
rojnuckarin1998brownian,
rojnuckarin2000bimolecular,suarez,
zhang2007efficient,
zhang2010weighted,zuckerman,zwier2015westpa}. 
Weighted ensemble currently has a substantial 
user base; see~\cite{westpa} for 
software and a more complete list of 
publications. In general terms, the method consists of periodically 
resampling from an ensemble of weighted replicas 
of a Markov process. In each of a number 
of {\em bins}, a certain number of replicas 
is maintained according to a prescribed replica {\em allocation}. 
The weights are adjusted so that the weighted 
ensemble has the correct distribution~\cite{zhang2010weighted}. 
In the context of rare event or 
small probability calculations, weighted 
ensemble can significantly 
outperform 
direct Monte Carlo, or independent 
replicas; the performance gain comes 
from allocating 
more particles to important or rare 
regions of state space. In this 
sense, weighted ensemble can be 
understood as an importance sampling 
method. We assume the underlying 
Markov process is expensive to simulate, 
so that optimizing variance versus 
cost is critical. In applications, 
the Markov process is usually a high dimensional drift diffusion,
such as Langevin molecular dynamics~\cite{stoltz2010free}, 
or a continuous time Markov chain 
representing reaction network 
dynamics~\cite{Kurtz}.

We focus on the computation of the 
average of a given function or
{\em observable} with respect to the 
unique steady state of the 
Markov process, though many of 
our ideas could also be applied 
in a finite time setting. 
One of the most common applications 
of weighted ensemble is the computation of a
{\em mean first passage time}, or the
mean time for a Markov process to go from 
an initial state to some target 
set.
The mean first passage time is an important quantity in physical and chemical processes, but it can be prohibitively large to compute using direct Monte Carlo simulation~\cite{adhikari2019computational,hofmann2003mean,metzler2014first}.
In weighted ensemble, the mean 
first passage time to a target set is reformulated, 
via the Hill relation~\cite{hill2005free}, 
as the inverse of the steady 
state flux into the target. Here, 
the observable is the characteristic or 
indicator
function of the target set, and 
the flux is the steady-state 
average of this observable.  This 
steady state can sometimes be accessed on 
time scales much smaller 
than the mean first passage 
time~\cite{adhikari2019computational,jeremy2,danonline}. 
On the other hand, 
when the mean first passage time is 
very large, 
the corresponding steady state flux is very small and needs to be 
estimated with substantial precision, 
which is why importance sampling is needed. 
The steady state in this case, and in 
most weighted ensemble applications, 
is {\em not} known up to a normalization 
factor. As a result, many standard Monte Carlo methods, 
including those based on Metropolis-Hastings, do not apply.

In this article
we consider weighted ensemble 
parameter optimization for 
steady state averages. Our work here 
expands on related results in~\cite{aristoff2016analysis} 
for weighted ensemble 
optimization on finite time horizons.
For a given Markov chain, weighted 
ensemble is completely characterized 
by the choice of resampling times, 
bins, and replica allocation.
{\em In this article we discuss how 
to choose the bins and replica allocation}. 
We also argue that the frequency of resampling times 
is mainly limited by processor 
interaction cost and not variance.
We use a first-principles, finite 
replica analysis  
based on Doob decomposing the 
weighted ensemble
variance. Our earlier work~\cite{aristoff2016analysis} 
is based on this same mathematical technique, but 
the methods described there  
are not suitable for long-time computations or  
bin optimization~\cite{aristoff2019ergodic}.
As is usual in importance sampling, 
our parameter optimizations 
require estimates of the very
variance terms we want to minimize. 
However, because {\em weighted ensemble is 
statistically exact no matter the parameters}~\cite{aristoff2019ergodic,zhang2010weighted}, 
these estimates can be crude. 
The choice of parameters affects 
the variance, not the mean;  
we only require parameters that 
are good enough to  
beat direct Monte Carlo.

From the point of view of applications, similar particle methods employing stratification 
include Exact Milestoning~\cite{bello2015exact}, 
Non-Equilibrium Umbrella 
Sampling~\cite{warmflash2007umbrella,dinner2016trajectory},  Transition 
Interface Sampling~\cite{van2003novel},  
Trajectory Tilting~\cite{vanden2009exact},
and Boxed Molecular dynamics~\cite{glowacki2011boxed}.  
There are related methods based on 
sampling {reactive paths},  
or paths going directly from a 
source to a target, in the context 
of the mean first passage 
time 
problem just cited. Such methods 
include Forward Flux Sampling~\cite{allen2006forward} 
and Adaptive Multilevel Splitting~\cite{brehier_AMS,brehier_AMS2,tony_AMS}. 
These methods differ from weighted 
ensemble in that they estimate the mean 
first passage time directly from reactive paths 
rather than from steady state and 
the Hill relation. In 
contrast with many of these methods, weighted ensemble 
is simple enough to allow for a 
relatively straightforward non-asymptotic variance 
analysis based on Doob decomposition~\cite{doobbook}.

From a mathematical viewpoint,
weighted ensemble is simply a resampling-based 
evolutionary algorithm. In this 
sense it resembles particle filters, sequential 
importance sampling, and  
sequential Monte Carlo. 
For a review 
of sequential Monte Carlo, see the 
textbook~\cite{del2004feynman}, the articles~\cite{del2014particle,del2005genealogical} or the 
compilation~\cite{doucet2001sequential}. 
There is some recent work on optimizing 
the Gibbs-Boltzmann input potential functions in sequential
Monte Carlo~\cite{balesdent2013optimisation,
chraibi2018optimal,jacquemart2016tuning,webber1,wouters2016rare} 
(see also~\cite{del2005genealogical}). 
We emphasize that weighted ensemble 
is different from most sequential Monte Carlo methods, 
as it relies on a 
bin-based resampling 
mechanism rather than a globally defined 
fitness function like a Gibbs-Boltzmann potential. In particular, sequential 
Monte Carlo is more commonly used 
to sample rare events on finite time horizons, and may not be appropriate 
for very long time computations of 
the sort considered here, as explained in~\cite{aristoff2019ergodic}. 

To our knowledge, the binning and 
particle allocation strategies we derive here are 
new. A similar allocation strategy 
for weighted ensemble on finite time 
horizons was proposed in~\cite{aristoff2016analysis}. Our allocation strategy, which minimizes {\em mutation variance} -- the 
variance corresponding 
to evolution of the replicas --
extends
ideas from~\cite{aristoff2016analysis} to 
our steady-state setup. 
We draw an analogy between minimizing 
mutation variance and minimizing sensitivity 
in an Appendix. In most weighted ensemble 
simulations, and in~\cite{aristoff2016analysis}, 
bins are chosen in an ad hoc way. We 
show below, however, that choosing 
bins carefully is important, particularly if 
relatively few can be afforded.
We propose a new binning strategy based on minimizing 
{\em selection variance} -- the 
variance associated with 
resampling from the replicas -- in which 
weighted ensemble bins are 
aggregated from a collection 
of smaller {\em microbins}~\cite{jeremy}.  

Formulas for the mutation 
and selection variance are 
derived in a
companion paper~\cite{aristoff2019ergodic} 
that proves an ergodic theorem 
for weighted ensemble time averages.
These variance formulas, which we use 
to define our allocation and bin 
optimizations, 
involve the Markov kernel $K$ that describes 
the evolution of the underlying Markov process between 
resampling times, as well as the solution, $h$, of a certain Poisson equation. 
We propose estimating $K$ 
and $h$ with 
techniques from Markov 
state modeling~\cite{husic2018markov,pande2010everything,
sarich2010approximation,
schutte2013metastability}. In this formulation, 
the microbins correspond 
to the Markov states, and 
a microbin-to-microbin transition 
matrix  
defines the approximations of 
$K$ and $h$.

This article is organized as follows. 
In Section~\ref{sec:alg}, we 
introduce notation, give 
an overview of  
weighted ensemble, and describe 
the parameters we wish to optimize. 
In 
Section~\ref{sec:notation}, we 
present weighted ensemble in 
precise detail (Algorithm~\ref{alg1}), reproduce the aforementioned
variance 
formulas (Lemmas~\ref{lem_mut_var} and~\ref{lem_sel_var}) from our companion 
paper~\cite{aristoff2019ergodic}, 
and describe the solution, $h$, to 
a certain Poisson equation arising from these formulas (equations~\eqref{h} and~\eqref{h2}).  
In Section~\ref{sec:var_min}, 
we introduce novel optimization 
problems (equations~\eqref{opt_allocation} 
and~\eqref{opt_bins}) for 
choosing the bins and particle allocation. 
The resulting allocation in~\eqref{Ntu} 
can be seen as a version of equation 
(6.4) in~\cite{aristoff2016analysis}, 
modified for steady state and our new 
microbin setup.
These optimizations are idealized 
in the sense that they involve 
$K$ and $h$, which cannot be computed 
exactly. Thus in Section~\ref{sec:params}, we 
propose using microbins and Markov 
state modeling to estimate 
their solutions
(Algorithms~\ref{alg_opt_allocation} and~\ref{alg_opt_bins}). In Section~\ref{sec:numerics}, we test our 
methods with a simple numerical example 
(Figures~\ref{fig_V_h_pi}-\ref{fig_optimization_data}). 
Concluding remarks and suggestions 
for future work are in Section~\ref{sec:remarks}. 
In the Appendix, we describe residual 
resampling~\cite{douc2005comparison}, a common 
resampling technique, and draw an analogy 
between sensitivity and our 
mutation variance minimization strategy.

\section{Algorithm}\label{sec:alg}
A weighted ensemble 
consists of a collection 
of replicas, or {\em particles}, belonging 
to a common state space, with associated 
positive scalar weights. The particles repeatedly 
undergo resampling and evolution 
steps. Using a genealogical analogy, we refer to particles 
before resampling as {\em parents}, 
and just after resampling as {\em children}. A 
child is initially just a copy of 
its parent, though it evolves 
independently of other children 
of the same parent. The total weight 
remains constant in time, which 
is important for the stability of 
long-time averages~\cite{aristoff2019ergodic} (and is a critical difference between 
the optimization algorithm described in~\cite{aristoff2016analysis} and 
the one outlined here). 
Between resampling steps, the particles evolve 
independently via the
same Markov kernel $K$.
The initial parents can be 
arbitrary, though their weights 
must sum to $1$; see Algorithm~\ref{alg_initialize} for a description 
of an initialization 
step tailored to steady-state calculations. 
The number $N_{\textup{init}}$ of initial 
particles can be larger than 
the number $N$ of weighted ensemble 
particles after the first 
resampling step.

For the resampling or {\em selection} step,
we require a collection of {\em bins}, 
denoted ${\mathcal B}$, and 
a {\em particle allocation} $N_t(u)_{t \ge 0}^{u \in {\mathcal B}}$, where $N_t(u)$ is the (nonnegative integer) number 
of children in bin $u \in {\mathcal B}$ 
at time $t$. We will assume the bins 
form a partition of state space (though 
in general they can be any grouping 
of the particles~\cite{aristoff2019ergodic}). 
The total number of 
children is always $N = \sum_{u \in {\mathcal B}} N_t(u)$. {\em Both the bins and 
particle allocation are user-chosen 
parameters, 
and to a large extent this article 
concerns how to pick these parameters.}
In general, the bins and particle 
allocation can be time dependent and adaptive.
For simpler presentation, however,
we assume that
the bins are based on a fixed 
partition of state space. After selection, 
the weight of each child in bin 
$u \in {\mathcal B}$
is $\omega_t(u)/N_t(u)$, 
where $\omega_t(u)$ is the sum of the 
weights of all the parents in bin $u$. By 
construction, this 
preserves the total weight $\sum_{u \in {\mathcal B}} \omega_t(u) = 1$.
In every occupied bin~$u$, the $N_t(u) \ge 1$ children are selected from the parents in bin $u$ with probability 
proportional to the parents' weights. (Here, we mean bin $u$ is occupied if $\omega_t(u)>0$. In
unoccupied bins, where $\omega_t(u) = 0$, we 
set $N_t(u)  = 0$.)

In the evolution or {\em mutation} step, 
the time $t$ advances, and
all the children independently evolve one  
step according to a 
fixed Markov kernel $K$, 
keeping their weights from 
the selection step, and
becoming the next parents. 
In practice, $K$ 
corresponds to 
the underlying 
process evaluated 
at {\em resampling 
times} $\Delta t$. 
That is, $K$ is a $\Delta t$-{\em skeleton} of 
the underlying 
Markov process~\cite{meyn}. 
For the mathematical analysis below, 
this will only 
be important
when we consider 
varying the resampling 
times, and in particular 
the $\Delta t \to 0$ limit. 
(We think of this 
underlying process 
as being continuous 
in time, though 
of course 
time discretization 
is usually required 
for simulations. Weighted 
ensemble is used {\em on top} 
of an integrator of the 
underlying process; 
in particular weighted 
ensemble does not handle 
the time discretization. We will 
not be concerned with 
the unavoidable 
error resulting from time 
discretization.)
{\em We 
assume that 
$K$ is uniformly geometrically
ergodic~\cite{doucbook}} with respect to a stationary
distribution, or steady state, $\mu$. Recall we 
are interested in 
steady-state averages. Thus 
for a given bounded real-valued function or {\em observable} $f$ on state space, we estimate
$\int f\,d\mu$ at each time by 
evaluating the weighted sum of $f$ 
on the current collection of parent particles.

We summarize weighted ensemble as follows:
\vskip2pt
\begin{itemize}[leftmargin=20pt]
\item In the {selection step} 
at time $t$, inside each bin $u$, we resample $N_t(u)$ children 
from the parents in $u$, according to the 
distribution defined by their 
weights. After selection, all the 
children in bin $u$ have the same 
weight $\omega_t(u)/N_t(u)$, 
where $\omega_t(u)$ is the total 
weight in bin $u$ before and after selection.
\item In each {mutation step}, 
the children evolve independently 
according to the Markov kernel $K$. After evolution, these children become the new parents.
\item The weighted ensemble evolves 
by repeated selection and then 
mutation steps. The time $t$ advances after a single pair of selection and mutation steps.
\end{itemize}
\vskip2pt
See Algorithm~\ref{alg1} for a 
detailed description of weighted ensemble. 

An important property of 
weighted ensemble is that 
it is unbiased no matter 
the choice of parameters: at 
time $t$ the weighted particles 
have the 
same distribution 
as a Markov chain 
evolving according to $K$. 
See Theorem~\ref{thm_unbiased}. 
 With
bad parameters, however, 
weighted ensemble can suffer 
from large variance, even 
worse than direct Monte Carlo. 
As there is no free lunch, choosing 
parameters cleverly requires either some  information 
about $K$, perhaps 
gleaned from prior simulations 
or obtained adaptively during 
weighted ensemble simulations. 
We will assume we have a 
collection of {\em microbins} 
which we use to gain information about 
$K$. The microbins, like the weighted 
ensemble bins, form a partition 
of state space, and each bin 
will be a union of microbins. 
We use the term microbins 
because the microbins may be smaller than 
the actual weighted ensemble bins. 
We discuss the reasoning behind this 
distinction in Section~\ref{sec:params}; see Remark~\ref{rmk_bins}.

\vskip5pt

\section{Mathematical notation and algorithm}\label{sec:notation}

We write
$\xi_t^1,\ldots,\xi_t^{N}$ for the 
parents at time $t$ and $\omega_t^1,\ldots,\omega_t^{N}$ for their weights. 
Their children are denoted
$\hat{\xi}_t^1,\ldots,\hat{\xi}_t^{N}$ 
with weights $\hat{\omega}_t^1,\ldots,\hat{\omega}_t^{N}$.
Thus, weighted ensemble advances in time as follows: 
\begin{align*}
&\{\xi_t^i\}^{i=1,\ldots,N}
\xrightarrow{\textup{selection}} 
\{{\hat \xi}_t^i\}^{i=1,\ldots,N}
\xrightarrow{\textup{mutation}} 
\{{\xi}_{t+1}^i\}^{i=1,\ldots,N},\\
&\{\omega_t^i\}^{i=1,\ldots,N}
\xrightarrow{\textup{selection}} 
\{{\hat \omega}_t^i\}^{i=1,\ldots,{N}}
\xrightarrow{\textup{mutation}} 
\{{\omega}_{t+1}^i\}^{i=1,\ldots,N}.
\end{align*}

The particles belong a 
common standard Borel state space~\cite{durrett2019probability}. 
This state space is divided into 
a finite collection ${\mathcal B}$ of 
disjoint subsets
(throughout we only consider 
measurable sets and functions). We 
define the {bin weights} at time $t$ as 
\begin{equation*}
\omega_t(u) = \sum_{i:\xi_t^i \in u} \omega_t^i, \qquad u \in {\mathcal B},
\end{equation*}
where the empty sum is zero (so an unoccupied bin $u$ has $\omega_t(u) = 0$).

For the parent $\xi_t^i$ of $\hat{\xi}_t^j$, we write $\text{par}(\hat{\xi}_t^j) = \xi_t^i$. 
A child is just a copy of its parent:
$$\text{par}(\hat{\xi}_t^j) = \xi_t^i \text{ implies }\hat{\xi}_t^j = \xi_t^i.$$ 
{\em Each child has a {unique} parent}.
Setting the number of children 
of each parent
completely defines 
the children, as the 
choices of the children's 
indices (the $j$'s in $\hat{\xi}_t^j$) do not matter.
The number of children of 
$\xi_t^i$ will be written $C_t^i$: 
\begin{equation}\label{children}
C_t^i = \#\{j: \textup{par}(\hat{\xi}_t^j) = \xi_t^i\}, 
\end{equation}
where $\#S =$ number of elements in a set $S$.

Recall that $N_t(u)$ is the number of 
children in bin $u$ at time $t$. We require that 
there is at least one child in each occupied bin, no children in unoccupied bins, and $N$ total 
children at each time $t$. Thus,
\begin{equation}\label{Ntu_cond}
N_t(u) \ge 1 \text{ if } \omega_t(u)>0, \qquad N_t(u)= 0 \text{ if } \omega_t(u) = 0, \qquad \sum_{u \in {\mathcal B}}N_t(u) = N.
\end{equation}

We write ${\mathcal F}_t$ for the $\sigma$-algebra generated by the  weighted ensemble up to, but not including, the $t$-th selection step. We will assume the particle allocation is known before selection. 
Similarly, we write $\hat{\mathcal F}_t$ 
for the $\sigma$-algebra generated by 
weighted ensemble up to and including 
the $t$-th selection step. In detail, 
\begin{align*}
{\mathcal F}_t &= \sigma\left((\xi_s^i, \omega_s^i)_{0 \le s \le  t}^{i=1,\ldots,N},N_s(u)_{0 \le s \le t}^{u \in {\mathcal B}},(\hat{\xi}_s^i,\hat{\omega}_s^i)_{0 \le s \le t-1}^{i=1,\ldots,N},(C_s^i)_{0 \le s \le t-1}^{i=1,\ldots,N}\right)\\
\hat{\mathcal F}_t &= \sigma\left((\xi_s^i, \omega_s^i)_{0 \le s \le  t}^{i=1,\ldots,N},N_s(u)_{0 \le s \le t}^{u \in {\mathcal B}},(\hat{\xi}_s^i,\hat{\omega}_s^i)_{0 \le s \le t}^{i=1,\ldots,N},(C_s^i)_{0 \le s \le t}^{i=1,\ldots,N}\right).
\end{align*}

\begin{algorithm}\caption{Weighted ensemble 
}

Pick initial parents and weights $(\xi_0^i,\omega_0^i)^{i=1,\ldots,N_{\textup{init}}}$ with $\sum_{i=1}^{N_{\textup{init}}} \omega_0^i = 1$, choose a collection ${\mathcal B}$ of bins, and define a final time $T$. Then for $t \ge 0$, iterate the following:
\vskip5pt

\begin{itemize}[leftmargin=20pt]
\item {\em (Selection step)}
Each parent $\xi_t^i$ is assigned a number 
$C_t^i$ of children, as follows:

\vskip5pt

In each occupied bin $u \in {\mathcal B}$, conditional on ${\mathcal F}_t$, let $(C_t^i)^{i:\textup{bin}(\xi_t^i) = u}$ be 
$N_t(u)$ samples from the distribution
$\{\omega_t^i/\omega_t(u):\,\xi_t^i \in u\}$, where $N_t(u)^{u \in {\mathcal B}}$ satisfies~\eqref{Ntu_cond}.
The children $(\hat{\xi}_t^i)^{i=1,\ldots,N}$ are  defined by~\eqref{children}, with weights
\begin{equation}\label{omegati}
{\hat \omega}_t^i = \frac{\omega_t(u)}{N_t(u)}, \qquad \textup{if }{\hat \xi}_t^i \in u.
\end{equation}
Selections in distinct bins are conditionally independent.
\vskip2pt
\item {\em (Mutation step)} 
Each child $\hat{\xi}_t^i$ independently  evolves one time step. Thus:
\vskip5pt
Conditionally on $\hat{\mathcal F}_t$, the children $(\hat{\xi}_t^i)^{i=1,\ldots,N}$ evolve independently according to the Markov kernel $K$, becoming the next parents $({\xi}_{t+1}^i)^{i=1,\ldots,N}$, 
with weights
\begin{equation}\label{weight_mut}
\omega_{t+1}^i = {\hat \omega}_t^i, \qquad i=1,\ldots,N.
\end{equation}
Then time advances,
$t \leftarrow t+1$. Stop if $t = T$, else 
return to the selection~step.
\end{itemize}
\vskip5pt
Algorithm~\ref{alg_opt_allocation} outlines an optimization for the allocation $N_t(u)^{u \in {\mathcal B}}$, and
a procedure for choosing the bins ${\mathcal B}$ is in Algorithm~\ref{alg_opt_bins}.  
For the initialization, see Algorithm~\ref{alg_initialize}.
\label{alg1}
\end{algorithm}

In Algorithm~\ref{alg1},
we do not explicitly say how 
we sample the $N_t(u)$ children in 
each bin $u$. 
Our framework below allows 
for any unbiased resampling 
scheme. We give a selection variance 
formula that assumes residual resampling; 
see Lemma~\ref{lem_sel_var} and 
Algorithm~\ref{alg_residual}. 
Residual resampling 
has performance on par with other 
standard resampling methods like 
systematic 
and stratified resampling~\cite{douc2005comparison}. See~\cite{webber2} 
for more details on resampling in 
the context of sequential Monte Carlo.

\subsection{Ergodic averages}

We are interested in using Algorithm~\ref{alg1} to estimate $$\theta_T \approx \int f\,d\mu$$ where
we recall $\mu$ is the stationary distribution of the Markov kernel $K$, and
\begin{equation}\label{theta_T}
\theta_T = \frac{1}{T}\sum_{t=0}^{T-1} \sum_{i=1}^N \omega_t^i f(\xi_t^i).
\end{equation}

Note that $\theta_T$ 
is simply the running average of $f$ over the 
weighted ensemble up to time $T-1$.
In particular,~\eqref{theta_T} is {\em not} 
a time average over ancestral lines that 
survive up to time $T-1$, but rather it is 
an average over the weighted 
ensemble at each time $0 \le t \le T-1$. 
Our time 
averages~\eqref{theta_T} require 
no replica storage and should have smaller 
variances than averages over surviving 
ancestral lines~\cite{aristoff2019ergodic}.

\subsection{Consistency results}
The next results, from a 
companion article~\cite{aristoff2019ergodic}, 
show that weighted ensemble is {unbiased} and that 
weighted ensemble time 
averages converge. 
The latter does 
not in general follow 
from the former, 
as standard unbiased particle methods 
such as sequential Monte Carlo can 
have variance explosion~\cite{aristoff2019ergodic}. 
(The proofs in~\cite{aristoff2019ergodic} have 
$N_{\textup{init}} = N$, but they are 
easily modified for $N_{\textup{init}} \ne N$.)

\begin{theorem}[From~\cite{aristoff2019ergodic}]\label{thm_unbiased}
In Algorithm~\ref{alg1}, 
suppose that the initial particles 
and weights are distributed 
as $\nu$, in the sense that
\begin{equation}\label{initialization}
{\mathbb E}\left[\sum_{i=1}^{N_{\textup{init}} }\omega_0^i g(\xi_0^i)\right] = \int g\,d\nu
\end{equation}
for all real-valued bounded functions $g$ on state space. Let $(\xi_t)_{t \ge 0}$ be a Markov chain 
with kernel $K$ and initial distribution 
$\xi_0 \sim \nu$. Then 
for any time $T > 0$,
\begin{equation*}
{\mathbb E}\left[\sum_{i=1}^{N} \omega_T^i g(\xi_T^i)\right] = {\mathbb E}[g(\xi_T)]
\end{equation*}
for all real-valued bounded functions $g$ on state space.
\end{theorem}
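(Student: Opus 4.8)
The plan is to prove unbiasedness by induction on $t$, showing at each step that the weighted empirical measure of the weighted ensemble is an unbiased estimator of the law of the Markov chain $(\xi_t)$ started at $\nu$. The key is to track the statement separately through the selection and mutation substeps, conditioning on the appropriate $\sigma$-algebra each time. Concretely, I would prove the stronger inductive claim that for all bounded $g$,
\begin{equation*}
{\mathbb E}\left[\sum_{i=1}^{N}\omega_t^i g(\xi_t^i)\right] = {\mathbb E}[g(\xi_t)],
\end{equation*}
with the base case $t=0$ (and $N_{\textup{init}}$ initial particles) being exactly the hypothesis~\eqref{initialization} with $\nu$, noting $\xi_0 \sim \nu$.

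First I would handle the selection step. Fix a bin $u$ and condition on ${\mathcal F}_t$. Since the allocation $N_t(u)$ is ${\mathcal F}_t$-measurable and each of the $N_t(u)$ children in $u$ is drawn from the parents in $u$ with probability proportional to their weights, the expected number of children of parent $\xi_t^i$ (with $\xi_t^i\in u$) is ${\mathbb E}[C_t^i\mid{\mathcal F}_t] = N_t(u)\,\omega_t^i/\omega_t(u)$. Because every child in $u$ carries weight $\hat\omega_t^i = \omega_t(u)/N_t(u)$ by~\eqref{omegati}, we get
\begin{equation*}
{\mathbb E}\left[\sum_{i:\hat\xi_t^i\in u}\hat\omega_t^i g(\hat\xi_t^i)\,\Big|\,{\mathcal F}_t\right] = \frac{\omega_t(u)}{N_t(u)}\sum_{i:\xi_t^i\in u}{\mathbb E}[C_t^i\mid{\mathcal F}_t]\,g(\xi_t^i) = \sum_{i:\xi_t^i\in u}\omega_t^i g(\xi_t^i).
\end{equation*}
Summing over the (finitely many, disjoint) bins $u\in{\mathcal B}$ — using that selections in distinct bins are conditionally independent, and that unoccupied bins contribute zero on both sides — yields ${\mathbb E}[\sum_i\hat\omega_t^i g(\hat\xi_t^i)\mid{\mathcal F}_t] = \sum_i\omega_t^i g(\xi_t^i)$; taking expectations and invoking the inductive hypothesis gives ${\mathbb E}[\sum_i\hat\omega_t^i g(\hat\xi_t^i)] = {\mathbb E}[g(\xi_t)]$. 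For the mutation step, condition on $\hat{\mathcal F}_t$: the children evolve independently by $K$ and keep their weights, so ${\mathbb E}[\sum_i\omega_{t+1}^i g(\xi_{t+1}^i)\mid\hat{\mathcal F}_t] = \sum_i\hat\omega_t^i (Kg)(\hat\xi_t^i)$. Applying the selection-step conclusion with the bounded function $Kg$ in place of $g$ gives ${\mathbb E}[\sum_i\omega_{t+1}^i g(\xi_{t+1}^i)] = {\mathbb E}[(Kg)(\xi_t)] = {\mathbb E}[g(\xi_{t+1})]$, which closes the induction and proves the theorem at time $T$.

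The main obstacle, such as it is, is bookkeeping rather than anything deep: one must be careful that $N_t(u)$ is genuinely ${\mathcal F}_t$-measurable (so it can be pulled out of the conditional expectation) and that the resampling scheme used inside each bin is \emph{unbiased} in the sense ${\mathbb E}[C_t^i\mid{\mathcal F}_t] = N_t(u)\,\omega_t^i/\omega_t(u)$ — this holds for multinomial resampling and for residual resampling (Algorithm~\ref{alg_residual}), and the argument only uses this first-moment identity, so it applies to any unbiased scheme. A minor subtlety is the $N_{\textup{init}}\ne N$ case: this only affects the base step $t=0$, where the sum runs over $i=1,\ldots,N_{\textup{init}}$; after the first selection step the population size is $N$ and the induction proceeds unchanged. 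One should also note the measurability of $Kg$ (finite, bounded since $K$ is a Markov kernel and $g$ bounded) so that the inductive hypothesis may legitimately be applied to it.
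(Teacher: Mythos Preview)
Your proof is correct. The paper does not actually give its own proof of Theorem~\ref{thm_unbiased} --- it is cited from the companion article~\cite{aristoff2019ergodic} --- but your induction-through-selection-and-mutation argument is the standard one, and the key first-moment identity ${\mathbb E}[C_t^i\mid{\mathcal F}_t] = N_t(u)\,\omega_t^i/\omega_t(u)$ you rely on is exactly equation~\eqref{betati}, which the paper uses in its proof of Proposition~\ref{prop_vis_var}; your handling of the $N_{\textup{init}}\ne N$ base case and of the ${\mathcal F}_t$-measurability of $N_t(u)$ is also consistent with the paper's conventions.
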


Recall we assume $K$ is uniformly geometrically ergodic~\cite{doucbook}.
\begin{theorem}[From~\cite{aristoff2019ergodic}] 
\label{thm_ergodic}
Weighted ensemble is ergodic 
in the following sense:
 $$\lim_{T \to \infty} \theta_T \stackrel{a.s.}{=} \int f\,d\mu.$$
\end{theorem}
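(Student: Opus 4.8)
The plan is to reduce the statement to a martingale law of large numbers via a Doob decomposition of $T\theta_T$, the technique the paper advertises. Write $\pi_t = \sum_{i=1}^N \omega_t^i\,\delta_{\xi_t^i}$ for the weighted empirical distribution of the parents at time $t$, so that $\theta_T = \frac1T\sum_{t=0}^{T-1}\pi_t(f)$ and each $\pi_t$ is a random \emph{probability} measure because the weights sum to $1$. Since $K$ is uniformly geometrically ergodic and $f$ is bounded, $h := \sum_{n\ge 0}\big(K^n f - \int f\,d\mu\big)$ converges uniformly and solves the Poisson equation $h - Kh = f - \int f\,d\mu$ with $\|h\|_\infty < \infty$; this is the function $h$ referenced in the introduction.

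First I would record the one-step unbiasedness of $\pi_t$ relative to $K$ (the per-step version of Theorem~\ref{thm_unbiased}). Combining the selection step, which is unbiased bin-by-bin since within an occupied bin $u$ each of the $N_t(u)$ children carries weight $\omega_t(u)/N_t(u)$ and its position is drawn with probability proportional to the parent weights, with the mutation step, in which each child evolves independently by $K$ while keeping its weight, one obtains $\mathbb{E}[\pi_{t+1}(g)\mid\mathcal{F}_t] = \pi_t(Kg)$ for bounded $g$. Hence $D_{t+1} := \pi_{t+1}(h) - \pi_t(Kh)$ defines martingale differences for the filtration $(\mathcal{F}_t)$. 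Using the Poisson equation, $\pi_t(f) - \int f\,d\mu = \pi_t(h-Kh) = \pi_t(h) - \pi_{t+1}(h) + D_{t+1}$, and summing over $t = 0,\dots,T-1$ telescopes to
\[
T\Big(\theta_T - \int f\,d\mu\Big) = \pi_0(h) - \pi_T(h) + \sum_{t=1}^{T} D_t .
\]
Dividing by $T$, the boundary term tends to $0$ because $|\pi_0(h)|, |\pi_T(h)| \le \|h\|_\infty < \infty$, so it remains to show $\frac1T\sum_{t=1}^T D_t \to 0$ almost surely.

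I would get this from the martingale convergence theorem applied to $\sum_{t\ge 1} D_t/t$ together with Kronecker's lemma, which needs a uniform-in-$t$ bound on the conditional variances $\mathbb{E}[D_t^2\mid\mathcal{F}_{t-1}]$. Splitting each step into its selection and mutation parts by inserting the intermediate $\sigma$-algebra $\hat{\mathcal{F}}_{t-1}$ gives $D_t = D_t^{\mathrm{sel}} + D_t^{\mathrm{mut}}$ with $\mathbb{E}[D_t^2\mid\mathcal{F}_{t-1}] = \mathbb{E}[(D_t^{\mathrm{sel}})^2\mid\mathcal{F}_{t-1}] + \mathbb{E}[(D_t^{\mathrm{mut}})^2\mid\mathcal{F}_{t-1}]$. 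Using the variance identities of Lemmas~\ref{lem_mut_var} and~\ref{lem_sel_var}, or elementary bounds on the per-step conditional variance, the mutation contribution is at most $\|h\|_\infty^2\sum_i(\hat\omega_{t-1}^i)^2 \le \|h\|_\infty^2$ and the selection contribution is at most, up to a universal constant, $\|h\|_\infty^2\sum_{u}\omega_{t-1}(u)^2 \le \|h\|_\infty^2$; both use $N_t(u)\ge 1$ on occupied bins and the conservation of total weight $\sum_u\omega_t(u)=1$. Thus $\mathbb{E}[D_t^2\mid\mathcal{F}_{t-1}]\le B$ for a deterministic constant $B$, so $\sum_{t\ge1}\mathbb{E}[(D_t/t)^2\mid\mathcal{F}_{t-1}]\le B\sum_{t\ge1}t^{-2}<\infty$; hence $\sum_t D_t/t$ converges a.s., and Kronecker's lemma yields $\frac1T\sum_{t=1}^T D_t\to0$ a.s., completing the argument.

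The main obstacle is the uniform-in-time variance bound. A priori the selection variance in a bin could be inflated if that bin carries almost all the weight but contains very few particles, and the mutation variance could be inflated by an atypically heavy child; ruling these out is exactly where uniform geometric ergodicity (which makes $h$ bounded) and total-weight conservation together with $N_t(u)\ge1$ are used. Without weight conservation --- as in the finite-horizon variant of~\cite{aristoff2016analysis} --- this step fails and the running averages need not stabilize, which is why the companion paper~\cite{aristoff2019ergodic} insists on it.
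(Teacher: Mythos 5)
Your argument is correct, but it takes a genuinely different route from the one this paper leans on (the theorem is imported from~\cite{aristoff2019ergodic}, and the machinery reproduced here for it is the Doob decomposition of $\theta_T$ itself). That route runs through Theorem~\ref{thm_doob} and Lemmas~\ref{lem_mut_var} and~\ref{lem_sel_var}: each of the $T$ mutation and selection increments of the Doob martingale ${\mathbb E}[\theta_T\mid{\mathcal F}_t]$ has conditional variance $O(1/T^2)$, uniformly in $t$ and $T$, because the horizon-dependent functions $h_{t+1}=\sum_{s=0}^{T-t-2}K^sf$ are uniformly bounded under uniform geometric ergodicity; this gives $\textup{Var}(\theta_T)=O(1/T)$, the bias is controlled by Theorem~\ref{thm_unbiased} together with ergodicity of $K$, and almost-sure convergence is then extracted from the $O(1/T)$ variance decay (a subsequence/Borel--Cantelli argument, using that the running average of a bounded quantity moves by at most $2\|f\|_\infty/T$ per step). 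You instead work with the stationary Poisson solution $h$ and the weighted empirical measures $\pi_t$: the identity ${\mathbb E}[\pi_{t+1}(g)\mid{\mathcal F}_t]=\pi_t(Kg)$ --- the one-step version of Theorem~\ref{thm_unbiased}, valid because the allocation is ${\mathcal F}_t$-measurable and the within-bin resampling is unbiased, cf.~\eqref{betati} --- turns the telescoped Poisson equation into a single martingale-difference sum, and Chow/Kronecker finishes it. For the ergodic statement alone this is leaner: you never need the variance formulas, the $T$-dependent $h_t$, or a subsequence interpolation, only $\|h\|_\infty<\infty$, conservation of total weight, and $N_t(u)\ge1$ on occupied bins; in fact the crude bound $|D_t|\le 2\|h\|_\infty$ already supplies the uniform conditional-variance bound you require, so your more delicate selection/mutation estimates are optional. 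What the paper's heavier decomposition buys is exactly the per-step mutation and selection variance expressions that drive the optimizations in Sections~\ref{sec:var_min} and~\ref{sec:params}, so it does double duty there. One small caution: Lemmas~\ref{lem_mut_var} and~\ref{lem_sel_var} describe increments of ${\mathbb E}[\theta_T\mid{\mathcal F}_t]$, not of your martingale $\sum_t\bigl(\pi_t(h)-\pi_{t-1}(Kh)\bigr)$, so they cannot be quoted verbatim for your variance bound; your fallback ``elementary bounds'' (or simply $|D_t|\le2\|h\|_\infty$) is the right justification at that step.
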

Theorem~\ref{thm_unbiased} does not use ergodicity of $K$, though Theorem~\ref{thm_ergodic} obviously does.

\subsection{Variance analysis}\label{sec:variance}

We will make use of the following 
analysis from~\cite{aristoff2019ergodic} concerning 
the weighted ensemble variance.
Define the Doob martingales~\cite{doob1940regularity}
\begin{equation}\label{doob_mart}
D_t = {\mathbb E}[\theta_T|{\mathcal F}_t], \qquad \hat{D}_t = {\mathbb E}[\theta_T|\hat{\mathcal F}_t].
\end{equation}

The Doob decomposition in Theorem~\ref{thm_doob} below filters the 
variance through the $\sigma$-algebras ${\mathcal F}_t$ and $\hat{\mathcal F}_t$. 
It is a way to decompose the variance 
into contributions from the initial condition 
and each time step. This type of telescopic variance decomposition is standard 
in sequential Monte Carlo, 
although it is usually applied at 
the level of measures on state space, 
which corresponds to the 
infinite particle limit $N \to \infty$~\cite{del2004feynman}. 
We use the finite $N$ formula 
directly to minimize variance, building on ideas in~\cite{aristoff2016analysis}. 
\begin{theorem}[From~\cite{aristoff2019ergodic}]\label{thm_doob}
By Doob decomposition,
\begin{align}
\theta_T^2 - {\mathbb E}[\theta_T]^2 + R_T &= 
\left(D_0 - {\mathbb E}[\theta_T]\right)^2 
\label{var0} \\
&\quad+ \sum_{t=1}^{T-1}{\mathbb E}\left[\left.\left(D_t-\hat{D}_{t-1}\right)^2\right|\hat{\mathcal F}_{t-1}\right] \label{var1}\\
&\qquad + 
\sum_{t=1}^{T-1}{\mathbb E}\left[\left.\left(\hat{D}_{t-1}-{D}_{t-1}\right)^2\right|{\mathcal F}_{t-1}\right], \label{var2}
\end{align}
where $R_T$ is mean-zero, ${\mathbb E}[R_T] = 0$.
\end{theorem}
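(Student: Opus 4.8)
The plan is to read the identity \eqref{var0}--\eqref{var2} as the random-variable form of the classical telescoping martingale variance decomposition applied to the Doob martingale of $\theta_T$, with every fluctuating term swept into the mean-zero remainder $R_T$. Because the allocation $N_s(u)$ is known before selection, the $\sigma$-algebras nest as ${\mathcal F}_0 \subseteq \hat{\mathcal F}_0 \subseteq {\mathcal F}_1 \subseteq \hat{\mathcal F}_1 \subseteq \cdots$, where ${\mathcal F}_t \subseteq \hat{\mathcal F}_t$ is the $t$-th selection step and $\hat{\mathcal F}_t \subseteq {\mathcal F}_{t+1}$ is the $t$-th mutation step. Since $f$ is bounded and the weights sum to $1$, $\theta_T$ in \eqref{theta_T} is bounded, and since it depends only on $(\xi_t^i,\omega_t^i)$ for $t \le T-1$ it is ${\mathcal F}_{T-1}$-measurable, so $D_{T-1} = \theta_T$. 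By the tower property $(D_t)$ and $(\hat D_t)$ together form a single martingale along the interleaved filtration, i.e. ${\mathbb E}[\hat D_t \mid {\mathcal F}_t] = D_t$ and ${\mathbb E}[D_{t+1} \mid \hat{\mathcal F}_t] = \hat D_t$. Telescoping along $D_0, \hat D_0, D_1, \hat D_1, \ldots, \hat D_{T-2}, D_{T-1} = \theta_T$, using ${\mathbb E}[D_0] = {\mathbb E}[\theta_T]$ and relabeling dummy indices, gives
\begin{equation*}
\theta_T - {\mathbb E}[\theta_T] = \big(D_0 - {\mathbb E}[\theta_T]\big) + \sum_{t=1}^{T-1}\big(\hat D_{t-1} - D_{t-1}\big) + \sum_{t=1}^{T-1}\big(D_t - \hat D_{t-1}\big).
\end{equation*}
I would relabel these increments in chronological order as $\Delta_0 = D_0 - {\mathbb E}[\theta_T]$, $\Delta_1 = \hat D_0 - D_0$, $\Delta_2 = D_1 - \hat D_0$, and so on, and write ${\mathcal G}_{k-1}$ for the element of the interleaved filtration immediately preceding $\Delta_k$; then ${\mathbb E}[\Delta_0] = 0$, each $\Delta_k$ with $k \ge 1$ satisfies ${\mathbb E}[\Delta_k \mid {\mathcal G}_{k-1}] = 0$, and $\Delta_j$ is ${\mathcal G}_{k-1}$-measurable whenever $j < k$.

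Next I would square. Using $\theta_T^2 - {\mathbb E}[\theta_T]^2 = (\theta_T - {\mathbb E}[\theta_T])^2 + 2{\mathbb E}[\theta_T]\big(\theta_T - {\mathbb E}[\theta_T]\big)$ and expanding $\big(\sum_k \Delta_k\big)^2$, the diagonal contributes $\sum_{k \ge 0}\Delta_k^2$ and the off-diagonal contributes $2\sum_{j < k}\Delta_j\Delta_k$. For each $k \ge 1$ I would replace $\Delta_k^2$ by ${\mathbb E}[\Delta_k^2 \mid {\mathcal G}_{k-1}]$: the selection increments then supply the summands ${\mathbb E}\big[(\hat D_{t-1} - D_{t-1})^2 \mid {\mathcal F}_{t-1}\big]$ of \eqref{var2}, the mutation increments supply the summands ${\mathbb E}\big[(D_t - \hat D_{t-1})^2 \mid \hat{\mathcal F}_{t-1}\big]$ of \eqref{var1}, and the remaining $k = 0$ term $\Delta_0^2 = (D_0 - {\mathbb E}[\theta_T])^2$ is exactly \eqref{var0}. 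Everything not appearing on the right-hand side I would then \emph{define} to be
\begin{equation*}
R_T := \sum_{k \ge 1}\big({\mathbb E}[\Delta_k^2 \mid {\mathcal G}_{k-1}] - \Delta_k^2\big) - 2\sum_{0 \le j < k}\Delta_j\Delta_k - 2{\mathbb E}[\theta_T]\big(\theta_T - {\mathbb E}[\theta_T]\big),
\end{equation*}
so that \eqref{var0}--\eqref{var2} holds as a genuine identity, and I would verify ${\mathbb E}[R_T] = 0$ term by term: each ${\mathbb E}[\Delta_k^2 \mid {\mathcal G}_{k-1}] - \Delta_k^2$ has mean zero by construction; for $j < k$ one has ${\mathbb E}[\Delta_j\Delta_k] = {\mathbb E}\big[\Delta_j\,{\mathbb E}[\Delta_k \mid {\mathcal G}_{k-1}]\big] = 0$ by the martingale-difference property; and ${\mathbb E}\big[{\mathbb E}[\theta_T]\big(\theta_T - {\mathbb E}[\theta_T]\big)\big] = 0$ trivially. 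Boundedness of $\theta_T$ makes all of these quantities integrable.

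The only real effort is bookkeeping: listing the interleaved selection and mutation increments in the right order, confirming $D_{T-1} = \theta_T$, and keeping track of the asymmetry that the initial term enters \eqref{var0} \emph{without} a conditional expectation whereas every later term enters \eqref{var1}--\eqref{var2} \emph{with} one. Beyond that, the claim is nothing more than the orthogonality of increments of the Doob martingale of $\theta_T$.
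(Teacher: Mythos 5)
Your argument is correct and is exactly the telescopic Doob-martingale decomposition along the interleaved filtration $\mathcal{F}_0 \subseteq \hat{\mathcal F}_0 \subseteq \mathcal{F}_1 \subseteq \cdots$ that the paper describes and imports from its companion article~\cite{aristoff2019ergodic}: telescoping $\theta_T = D_{T-1}$ against $D_0$, splitting each increment into selection and mutation parts, squaring, and absorbing the off-diagonal and centering terms (which vanish in expectation by the martingale-difference orthogonality) into $R_T$. The bookkeeping you flag (nesting of the $\sigma$-algebras, $\mathcal{F}_{T-1}$-measurability of $\theta_T$, boundedness for integrability) is exactly what is needed, so no gap remains.
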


The terms on the right-hand side of~\eqref{var0},\eqref{var1}, and~\eqref{var2} yield
the contributions to the variance of $\theta_T$ 
from, respectively, the initial condition, mutation steps, 
and selection steps of Algorithm~\ref{alg1}. 
We thus refer to the summands of~\eqref{var1} and~\eqref{var2} as the 
{\em mutation variance} and {\em selection variance} 
of weighted ensemble.

Below, define 
\begin{equation}\label{ht}
h_{t} = \sum_{s=0}^{T-t-1} K^s f,
\end{equation}
and for any function $g$ 
and probability distribution $\eta$ on 
state space, let
\begin{equation}\label{var}
\textup{Var}_\eta g := \int g^2(\xi)\eta(d\xi) - \left(\int g(\xi)\eta(d\xi)\right)^2.
\end{equation}
Above and below, the dependence of $D_t$, ${\hat D}_t$ and $h_t$ on $T$ is suppressed.
\begin{lemma}[From~\cite{aristoff2019ergodic}]\label{lem_mut_var}
The mutation variance 
at time $t$ is 
\begin{align}\label{eq_mutvar}
{\mathbb E}\left[\left.\left(D_{t+1}-\hat{D}_{t}\right)^2\right|\hat{\mathcal F}_{t}\right] = \frac{1}{T^2}\sum_{i=1}^N \left(\hat{\omega}_t^i\right)^2\textup{Var}_{K(\hat{\xi}_t^i,\cdot)}h_{t+1}.
\end{align}
\end{lemma}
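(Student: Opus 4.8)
The plan is to obtain closed forms for $D_{t+1}$ and $\hat{D}_t$ from a conditional version of the unbiasedness in Theorem~\ref{thm_unbiased}, and then to compute the conditional second moment of their difference using the conditional independence built into the mutation step.

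First I would split the running average \eqref{theta_T} as $\theta_T = \frac1T\sum_{s=0}^{t}\sum_i \omega_s^i f(\xi_s^i) + \frac1T\sum_{s=t+1}^{T-1}\sum_i \omega_s^i f(\xi_s^i)$. The first block is ${\mathcal F}_t$-measurable, hence both ${\mathcal F}_{t+1}$- and $\hat{\mathcal F}_t$-measurable, so it survives intact in $D_{t+1}$ and in $\hat{D}_t$ and cancels in the increment. For the second block I would invoke the Markov structure of Algorithm~\ref{alg1}: conditionally on ${\mathcal F}_{t+1}$, the weighted ensemble for times $s\ge t+1$ is again a weighted ensemble started from the configuration $(\xi_{t+1}^i,\omega_{t+1}^i)$, so the conditional analogue of Theorem~\ref{thm_unbiased} gives ${\mathbb E}[\sum_i \omega_s^i f(\xi_s^i)\mid {\mathcal F}_{t+1}] = \sum_i \omega_{t+1}^i (K^{s-t-1}f)(\xi_{t+1}^i)$. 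Summing over $s=t+1,\dots,T-1$ and recognizing the telescoped sum of powers of $K$ as $h_{t+1}$ in \eqref{ht} yields $D_{t+1} = \frac1T\sum_{s=0}^{t}\sum_i\omega_s^i f(\xi_s^i) + \frac1T\sum_i\omega_{t+1}^i h_{t+1}(\xi_{t+1}^i)$. An identical argument, now conditioning on $\hat{\mathcal F}_t$ and restarting the sub-ensemble from the children $(\hat{\xi}_t^i,\hat{\omega}_t^i)$ so that exactly one mutation step intervenes before time $t+1$, gives $\hat{D}_t = \frac1T\sum_{s=0}^{t}\sum_i\omega_s^i f(\xi_s^i) + \frac1T\sum_i\hat{\omega}_t^i (Kh_{t+1})(\hat{\xi}_t^i)$, since $\sum_{s=t+1}^{T-1}K^{s-t}f = Kh_{t+1}$.

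Subtracting, and using $\omega_{t+1}^i = \hat{\omega}_t^i$ from \eqref{weight_mut}, I obtain the compact expression $D_{t+1} - \hat{D}_t = \frac1T\sum_{i=1}^N \hat{\omega}_t^i\left(h_{t+1}(\xi_{t+1}^i) - (Kh_{t+1})(\hat{\xi}_t^i)\right)$. Conditionally on $\hat{\mathcal F}_t$ the mutation step makes the $\xi_{t+1}^i$ independent with $\xi_{t+1}^i\sim K(\hat{\xi}_t^i,\cdot)$, so each summand has conditional mean zero and the summands are conditionally independent. Hence the conditional second moment of the sum is the sum of the conditional second moments: ${\mathbb E}[(D_{t+1}-\hat{D}_t)^2\mid\hat{\mathcal F}_t] = \frac1{T^2}\sum_i (\hat{\omega}_t^i)^2\,{\mathbb E}[(h_{t+1}(\xi_{t+1}^i) - (Kh_{t+1})(\hat{\xi}_t^i))^2\mid\hat{\mathcal F}_t]$, and since $(Kh_{t+1})(\hat{\xi}_t^i)$ is exactly the mean of $h_{t+1}$ under $K(\hat{\xi}_t^i,\cdot)$, the inner expectation equals $\textup{Var}_{K(\hat{\xi}_t^i,\cdot)}h_{t+1}$ by \eqref{var}. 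This is the asserted identity.

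The main obstacle is the clean justification of the conditional unbiasedness identity ${\mathbb E}[\sum_i \omega_s^i f(\xi_s^i)\mid {\mathcal F}_{t+1}] = \sum_i \omega_{t+1}^i(K^{s-t-1}f)(\xi_{t+1}^i)$ and its $\hat{\mathcal F}_t$-counterpart: one must verify that the selection and mutation rules of Algorithm~\ref{alg1} for times $\ge t+1$ depend on the past only through the current particle configuration, so that the restarted dynamics genuinely form a weighted ensemble to which Theorem~\ref{thm_unbiased} applies, while noting that the (possibly adaptive) allocation $N_s(u)$ is ${\mathcal F}_s$-measurable and that Theorem~\ref{thm_unbiased} is valid for every allocation. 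Everything else — the time splitting, the telescoping of $\sum_s K^{s}f$ into $h_{t+1}$ and $Kh_{t+1}$, and the conditional-variance bookkeeping — is routine.
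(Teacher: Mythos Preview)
Your argument is correct and is the standard route: write $D_{t+1}$ and $\hat D_t$ explicitly via a conditional version of Theorem~\ref{thm_unbiased}, subtract to get $\frac1T\sum_i \hat\omega_t^i\bigl(h_{t+1}(\xi_{t+1}^i)-Kh_{t+1}(\hat\xi_t^i)\bigr)$, and then use conditional independence of the mutation step. Note, however, that the present paper does not actually prove Lemma~\ref{lem_mut_var}; it is imported from the companion paper~\cite{aristoff2019ergodic}, so there is no in-paper proof to compare against. Your derivation is exactly what one expects that proof to be, and it is consistent with how the paper \emph{uses} the lemma in the proof of Proposition~\ref{prop_vis_var}, where the identity~\eqref{eq_mutvar} is taken as the starting point and then averaged over $\hat{\mathcal F}_t$ given ${\mathcal F}_t$ via~\eqref{betati}. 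The caveat you flag---that the conditional unbiasedness identity requires the restarted ensemble at time $t+1$ (respectively, after the $t$th selection) to again be a bona fide weighted ensemble---is the right thing to check, and it holds because the selection and mutation rules in Algorithm~\ref{alg1} depend on the past only through the current configuration and because the allocation is ${\mathcal F}_s$-measurable at each step.
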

To formulate the selection variance, we define
\begin{equation*}
\delta_t^i = \frac{N_t(u)\omega_t^i}{\omega_t(u)} - \left\lfloor\frac{N_t(u)\omega_t^i}{\omega_t(u)} \right\rfloor \qquad \text{if }\xi_t^i \in u, \qquad \delta_t(u) = \sum_{i: \xi_t^i \in u} \delta_t^i,
\end{equation*}
where $\lfloor x\rfloor$ is the floor function, or the greatest integer less than or equal to $x$. In Lemma~\ref{lem_sel_var}, we assume that the $(C_t^j)^{j=1,\ldots,N}$ in 
Algorithm~\ref{alg1}
are obtained using
residual resampling. See~\cite{douc2005comparison,webber2} 
or Algorithm~\ref{alg_residual} in the Appendix below for a description of residual resampling.
\begin{lemma}[From~\cite{aristoff2019ergodic}]\label{lem_sel_var}
The 
selection variance at time $t$ is 
\begin{align}\begin{split}\label{sel_varnew}
&{\mathbb E}\left[\left.\left(\hat{D}_{t}-{D}_{t}\right)^2\right|{\mathcal F}_{t}\right] = \frac{1}{T^2}
\sum_{u\in {\mathcal B}} \left(\frac{\omega_t(u)}{N_t(u)}\right)^2\delta_t(u)\textup{Var}_{\eta_t(u)}Kh_{t+1},\\
& \qquad \eta_t(u) := \sum_{i:\textup{bin}(\xi_t^i) = u} \frac{\delta_t^i}{\delta_t(u)}\delta_{\xi_t^i}
\end{split}
\end{align}
\end{lemma}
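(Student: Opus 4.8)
The plan is to compute the selection variance $\mathbb{E}[(\hat D_t - D_t)^2|\mathcal F_t]$ directly from the Doob decomposition by exploiting the conditional independence of the selections across bins and the explicit structure of residual resampling. First I would express $\hat D_t - D_t$ as a sum over particles. Since $\hat D_t = \mathbb E[\theta_T|\hat{\mathcal F}_t]$ and, given the children's positions and weights after the $t$-th selection, $\theta_T$ depends linearly (in expectation) on the test function evaluated on the children, one shows that $\hat D_t$ is an affine function of the quantities $\hat\omega_t^i \, (Kh_{t+1})(\hat\xi_t^i)$ plus an $\hat{\mathcal F}_t$-independent--of--selection part; here $h_{t+1}$ is the partial Neumann series from~\eqref{ht} and the extra application of $K$ accounts for the mutation step that immediately follows selection. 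Taking the further conditional expectation over $\mathcal F_t$ turns this into a sum over bins of $\omega_t(u)$ times the bin-average of $Kh_{t+1}$. Consequently $\hat D_t - D_t = \tfrac1T \sum_{u\in\mathcal B} \frac{\omega_t(u)}{N_t(u)} \sum_{i:\mathrm{bin}(\xi_t^i)=u} \big(C_t^i - \mathbb E[C_t^i|\mathcal F_t]\big)(Kh_{t+1})(\xi_t^i)$, since each child in bin $u$ carries weight $\omega_t(u)/N_t(u)$ and sits at its parent's location.

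Next I would square and take $\mathbb E[\cdot|\mathcal F_t]$. Because selections in distinct bins are conditionally independent and $\mathbb E[C_t^i - \mathbb E[C_t^i|\mathcal F_t]\,|\,\mathcal F_t]=0$, the cross-bin terms vanish and the result decomposes as $\tfrac1{T^2}\sum_u (\omega_t(u)/N_t(u))^2 \,\mathrm{Cov}$-type sum over the parents in bin $u$ of $(Kh_{t+1})$. So the computation reduces to identifying, for residual resampling within a single bin, the covariance matrix of the child counts $(C_t^i)_{i:\xi_t^i\in u}$. This is where the structure of residual resampling enters: the deterministic part $\lfloor N_t(u)\omega_t^i/\omega_t(u)\rfloor$ contributes nothing to the variance, and the random remainder is a single multinomial-type draw of $\delta_t(u)$ (an integer) additional children assigned with probabilities proportional to the residuals $\delta_t^i$. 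One then invokes the standard multinomial covariance structure to see that the resulting quadratic form is exactly $\delta_t(u)\,\mathrm{Var}_{\eta_t(u)}(Kh_{t+1})$, with $\eta_t(u)$ the residual-weighted empirical measure on the parents in bin $u$ defined in the statement.

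The main obstacle I anticipate is the first step: carefully verifying that $\hat D_t - D_t$ has the claimed clean linear form. This requires tracking that conditioning $\theta_T$ on $\hat{\mathcal F}_t$ versus $\mathcal F_t$ differs only through the selection randomness, and that after integrating out all future mutations and selections the dependence collapses onto $Kh_{t+1}$ evaluated at the children's (equivalently parents') positions weighted by $\hat\omega_t^i$. One must use the unbiasedness / tower property machinery behind Theorem~\ref{thm_unbiased} repeatedly, and be careful that $\theta_T$ is a time-average including the terms at times $s\le t$, which are $\hat{\mathcal F}_t$- and $\mathcal F_t$-measurable and hence drop out of the difference; only the future contributions $s>t$ survive, producing the geometric sum $h_{t+1}=\sum_{s=0}^{T-t-2}K^s f$ and the leading $K$. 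A secondary bookkeeping nuisance is confirming the factor $\omega_t(u)/N_t(u)$ appears squared and that the combinatorial identity for the residual multinomial covariance matches $\delta_t(u)\mathrm{Var}_{\eta_t(u)}$ exactly rather than up to an $N_t(u)$-dependent factor. Once the linear representation is in hand, the rest is the routine variance-of-a-multinomial computation plus the cross-bin independence already guaranteed by Algorithm~\ref{alg1}.
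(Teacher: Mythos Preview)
The paper does not actually prove this lemma; it is quoted verbatim from the companion paper~\cite{aristoff2019ergodic} and no argument is given here. So there is no ``paper's own proof'' to compare against.

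That said, your outline is correct and is the natural route. The key identity
\[
\hat D_t - D_t \;=\; \frac{1}{T}\sum_{u\in\mathcal B}\frac{\omega_t(u)}{N_t(u)}\sum_{i:\xi_t^i\in u}\bigl(C_t^i-\mathbb E[C_t^i\mid\mathcal F_t]\bigr)\,Kh_{t+1}(\xi_t^i)
\]
follows exactly as you describe: the $s\le t$ terms in $\theta_T$ are $\mathcal F_t$-measurable and cancel, and the tower/unbiasedness argument collapses the $s\ge t+1$ terms to $\sum_i\hat\omega_t^i\,Kh_{t+1}(\hat\xi_t^i)$, which rewrites as the bin sum above since each child in $u$ has weight $\omega_t(u)/N_t(u)$ and sits at its parent's location. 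Squaring, the cross-bin terms vanish by conditional independence, and within a bin the residual part $R_t^i=C_t^i-\lfloor N_t(u)\omega_t^i/\omega_t(u)\rfloor$ is multinomial with $\delta_t(u)$ trials and probabilities $\delta_t^i/\delta_t(u)$, so the standard multinomial covariance $n(p_i\delta_{ij}-p_ip_j)$ gives exactly $\delta_t(u)\,\textup{Var}_{\eta_t(u)}Kh_{t+1}$. The factors and the squared $\omega_t(u)/N_t(u)$ fall out with no extra $N_t(u)$, as you anticipated. Your bookkeeping concerns are well placed but all check out.
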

By definition~\eqref{var}, the variances in Lemmas~\ref{lem_mut_var} and~\ref{lem_sel_var} 
rewrite as
\begin{align*}
&\textup{Var}_{K(\hat{\xi}_t^i,\cdot)}h_{t+1} = Kh_{t+1}^2(\hat{\xi}_t^i) - (Kh_{t+1}(\hat{\xi}_t^i))^2,\\ 
&\textup{Var}_{\eta_t(u)}Kh_{t+1} = \sum_{i:\textup{bin}(\xi_t^i) = u} \frac{\delta_t^i}{\delta_t(u)}(Kh_{t+1}(\xi_t^i))^2- \left(
\sum_{i:\textup{bin}(\xi_t^i) = u} \frac{\delta_t^i}{\delta_t(u)}Kh_{t+1}(\xi_t^i)\right)^2.
\end{align*}

\subsection{Poisson equation}

Because we are interested in large 
time horizons $T$, below we will consider the mutation and selection variances in the limit 
$T \to \infty$. We will see that for any probability distribution $\eta$ on state space,
\begin{align*}
\lim_{T \to \infty} \textup{Var}_\eta h_{t+1} &= \textup{Var}_\eta h\\ \lim_{T \to \infty} \textup{Var}_\eta Kh_{t+1} &= \textup{Var}_\eta Kh,
\end{align*}
where $h$ is the solution to the Poisson 
equation~\cite{lelievre2016partial,nummelin}
\begin{equation}\label{h}
({Id}-K)h = f - \int f \,d\mu, \qquad \int h\,d\mu = 0,
\end{equation}
where $Id = $ the identity kernel. 
Existence and uniqueness of the solution $h$ easily follow from uniform geometric 
ergodicity of the Markov kernel $K$. 
Indeed, if
$(\xi_t)_{t \ge 0}$ is a Markov chain 
with kernel $K$, then we can write
\begin{align}\begin{split}\label{h2}
h(\xi) &=  \sum_{t=0}^\infty \left(K^tf(\xi) - \int f\,d\mu\right)\\
&= \sum_{t=0}^\infty \left({\mathbb E}[f(\xi_t)|\xi_0 = \xi]- {\mathbb E}[f(\xi_t)|\xi_0 \sim \mu]\right),
\end{split}
\end{align}
where $\xi_0 \sim \mu$ indicates $\xi_0$ is initially distributed according 
to the steady state $\mu$ of $K$.
Uniform geometric ergodicity and the Weierstrass $M$-test show 
that the sums in~\eqref{h2} converge 
absolutely and
uniformly. As a consequence, $h$ in~\eqref{h2} 
solves~\eqref{h}.

Interpreting~\eqref{h2}, $h(\xi)$ is the 
mean discrepancy 
of a time average of $f(\xi_t)$ 
starting at $\xi_0 = \xi$
with a time average of $f(\xi_t)$ 
starting at steady state $\xi_0 \sim \mu$. 
This discrepancy in 
the time averages
has  
been normalized so that it 
has a nontrivial limit, and in particular
does not vanish, as time goes 
to infinity.

The Poisson solution $h$ is critical for understanding 
and estimating the weighted ensemble 
variance. Besides, $h$ can be used to 
identify model features, such as sets that are {\em metastable}
for the underlying Markov chain 
defined by $K$, as well as narrow 
pathways 
between these sets. Metastable 
sets are, roughly speaking, regions of 
state space in which 
the Markov chain tends 
to become trapped.

To understand the behavior 
of $h$, we define {metastable 
sets} more precisely. A region $R$ 
in state space is 
metastable for $K$ 
if $(\xi_t)_{t \ge 0}$ tends to 
equilibrate in $R$ much faster 
than it escapes from $R$. 
The rate of equilibration in 
$R$ can be understood 
in terms of the {\em quasistationary 
distribution}~\cite{QSD} in $R$. 
See {\em e.g.}~\cite{lelievre2012two} 
for more discussion on metastablity.
The Poisson solution $h$ 
tends to be nearly constant 
in regions that are metastable 
for $K$. This is 
because the mean discrepancy 
in a time average of $f(\xi_t)$ over 
two copies of
$(\xi_t)_{t \ge 0}$ with different
starting points in the 
same metastable set $R$ is small: 
both copies 
tend to reach the same
 quasistationary distribution
in $R$ 
before escaping from $R$. In 
the regions between 
metastable sets, however, $h$ 
tends to have large 
variance. If $f$ is a 
characteristic or indicator 
function, this 
variance tends to be 
larger the closer these 
regions are to the support 
of $f$ (the set where $f = 1$). More 
generally, the variance of $h$ 
is larger near regions $R$ 
where the stationary 
average $\int_R f\,d\mu$ of 
$f$ is large. See 
Figure~\ref{fig_V_h_pi} 
for illustration of 
these features.

\section{Minimizing the variance}\label{sec:var_min}

Our strategy for minimizing 
the variance is based on {\em 
choosing the particle allocation to 
minimize mutation variance} 
and {\em picking the bins to 
mitigate selection variance}. 
Minimizing mutation variance 
is closely connected with minimizing 
a certain sensitivity; see 
the Appendix below. Both 
strategies require some 
coarse estimates of $K$ 
and $h$. We propose using 
ideas from Markov state modeling 
to construct {\em microbins} 
from which we estimate $K,h$. 
The microbins can be significantly 
smaller 
than the weighted ensemble bins, 
as we discuss below.

\subsection{Resampling times}\label{sec:resample}

Recall that weighted ensemble is fully characterized 
by the choice of resampling times, bins, and particle 
allocation. Though we focus on the latter two here, 
we briefly comment on the former. The 
resampling times are implicit in our framework. We 
assume here that  
$K = K_{\Delta t}$ is a $\Delta t$-skeleton of an underlying Markov process, or a sequence of values of 
the underlying process at time intervals $\Delta t$. 
In this setup, $\Delta t$ is a fixed 
resampling time, and we are ignoring 
the time discretization. 
(Actually, the resampling times 
need not be fixed -- they can 
be any times at which the underyling 
process has the strong Markov property~\cite{aristoff2016analysis}. 
In practice, the underlying 
process must be discretized 
in time, and weighted 
ensemble is used with 
the discretized process.)

Suppose that the underlying Markov process is one 
of the ones mentioned in 
the Introduction: either Langevin dynamics, or
reaction network modeled by a continuous time Markov chain 
on finite 
state space. 
Suppose moreover that 
microbins in continuous state space are 
domains with piecewise smooth 
boundaries (for instance, Voronoi 
regions), and that the bins are unions of microbins. Then the 
underlying process does not cross 
between distinct microbins, or between distinct bins, infinitely often in finite time. As a result, weighted ensemble 
should not degenerate 
in the 
limit as $\Delta t \to 0$, 
as we now show.

Consider the variance from selection 
in Lemma~\ref{lem_sel_var}. By~\eqref{omegati}, the 
weights of all the children in each 
bin $u \in {\mathcal B}$ 
are all equal to $\omega_t(u)/N_t(u)$ after 
the selection step. If $\Delta t$ is 
very small, then almost none 
of the children move to different 
microbins 
or bins in the mutation step. 
If exactly zero of the children change bins, then for residual resampling in the 
selection step at the next time $t$, provided the allocation $N_t(u)^{u \in {\mathcal B}}$ has 
not changed, we have $\delta_t^i = 0$ for all $i=1,\ldots,N$. (Note that 
with our optimal allocation strategy 
in Algorithm~\ref{alg_opt_allocation}, 
if we avoid unnecessary  
resampling of $R_t(u)^{u \in {\mathcal B}}$, 
then
$N_t(u)^{u \in {\mathcal B}}$ 
does not change unless particles 
move between microbins.) Thus from Lemma~\ref{lem_sel_var} there 
is {zero selection variance}, and 
in fact no resampling occurs in 
the selection step. 
Provided particles do not cross 
bins or microbins infinitely often in finite time, 
and the allocation only changes when 
particles move between microbins, 
this suggests
there is {\em no variance blowup 
when $\Delta t \to 0$}. 
We expect then that the frequency $\Delta t$ of resampling 
should be driven not by variance 
cost but by computational cost, 
{\em e.g.} processor communication cost.

\subsection{Minimizing mutation variance}

The mutation variance depends on 
the choice of weighted ensemble bins as well as
the particle allocation at each time $t$. 
In this section we focus on the particle 
allocation for an arbitrary choice ${\mathcal B}$ 
of bins. To understand this relationship between the allocation and mutation variance, following 
ideas from~\cite{aristoff2016analysis}, {\em we look at the mutation 
variance visible {before} selection}. 
It is so named because, unlike the 
mutation variance in Lemma~\ref{lem_mut_var},
it is a function of quantities $\omega_t(u)$, $N_t(u)$, $(\omega_t^i,\xi_t^i)^{i=1,\ldots,N}$ 
that are known at time $t$ before selection.
\begin{proposition}\label{prop_vis_var}
The mutation variance visible before selection satisfies 
\begin{equation*}
\lim_{T \to \infty} T^2{\mathbb E}\left[\left.\left(D_{t+1}-\hat{D}_{t}\right)^2\right|{\mathcal F}_{t}\right]  = \sum_{u \in {\mathcal B}}\frac{\omega_t(u)}{N_t(u)} \sum_{i: {\xi}_t^i \in u}\omega_t^i \textup{Var}_{K(\xi_t^i,\cdot)} h, 
\end{equation*}
where $h$ is defined in~\eqref{h}.
\end{proposition}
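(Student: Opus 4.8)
\emph{Proof proposal.} The plan is to obtain the mutation variance visible before selection by conditioning the formula of Lemma~\ref{lem_mut_var} down from $\hat{\mathcal F}_t$ to ${\mathcal F}_t$, and then to pass to the limit $T\to\infty$ using the approximation of $h_{t+1}$ by the Poisson solution $h$. Since ${\mathcal F}_t\subseteq\hat{\mathcal F}_t$, the tower property together with Lemma~\ref{lem_mut_var} gives
\[
{\mathbb E}\left[\left.\left(D_{t+1}-\hat{D}_t\right)^2\right|{\mathcal F}_t\right]=\frac{1}{T^2}\,{\mathbb E}\left[\left.\sum_{i=1}^N\left(\hat{\omega}_t^i\right)^2\textup{Var}_{K(\hat{\xi}_t^i,\cdot)}h_{t+1}\,\right|{\mathcal F}_t\right],
\]
so the whole computation reduces to evaluating the conditional expectation on the right.

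First I would regroup the sum over children by bins. By~\eqref{omegati}, all $N_t(u)$ children in a bin $u$ carry the common, ${\mathcal F}_t$-measurable weight $\omega_t(u)/N_t(u)$, so the $i$-th summand equals $(\omega_t(u)/N_t(u))^2\,\textup{Var}_{K(\hat{\xi}_t^i,\cdot)}h_{t+1}$ and depends on the child only through its position. Grouping children according to their parent via~\eqref{children} and using that, by unbiasedness of the resampling, ${\mathbb E}[C_t^i\mid{\mathcal F}_t]=N_t(u)\,\omega_t^i/\omega_t(u)$ whenever $\xi_t^i\in u$, linearity of conditional expectation yields
\[
{\mathbb E}\left[\left.\sum_{i=1}^N\left(\hat{\omega}_t^i\right)^2\textup{Var}_{K(\hat{\xi}_t^i,\cdot)}h_{t+1}\,\right|{\mathcal F}_t\right]=\sum_{u\in{\mathcal B}}\left(\frac{\omega_t(u)}{N_t(u)}\right)^2\sum_{i:\xi_t^i\in u}\frac{N_t(u)\,\omega_t^i}{\omega_t(u)}\,\textup{Var}_{K(\xi_t^i,\cdot)}h_{t+1};
\]
the only feature of the resampling used here is that each child's marginal law is the weight-normalized empirical distribution of the parents in its bin, so this holds for residual resampling as in Algorithm~\ref{alg1}, and indeed for any unbiased scheme. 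Cancelling $(\omega_t(u)/N_t(u))^2\cdot N_t(u)\omega_t^i/\omega_t(u)=(\omega_t(u)/N_t(u))\,\omega_t^i$ and multiplying through by $T^2$ gives, for every $T>t$,
\[
T^2\,{\mathbb E}\left[\left.\left(D_{t+1}-\hat{D}_t\right)^2\right|{\mathcal F}_t\right]=\sum_{u\in{\mathcal B}}\frac{\omega_t(u)}{N_t(u)}\sum_{i:\xi_t^i\in u}\omega_t^i\,\textup{Var}_{K(\xi_t^i,\cdot)}h_{t+1},
\]
with the convention that terms over unoccupied bins ($\omega_t(u)=N_t(u)=0$, empty inner sum) are zero, and noting that none of $\omega_t(u)$, $N_t(u)$, $\omega_t^i$, $\xi_t^i$ depends on the horizon $T$.

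The remaining step is to let $T\to\infty$, and this is where I expect the only genuine (though mild) subtlety. Because the sums over bins and over the $N$ particles are finite, it suffices to show $\textup{Var}_{K(\xi_t^i,\cdot)}h_{t+1}\to\textup{Var}_{K(\xi_t^i,\cdot)}h$ for each $i$. Writing, via~\eqref{ht} and stationarity of $\mu$, $h_{t+1}=\sum_{s=0}^{T-t-2}\bigl(K^sf-\int f\,d\mu\bigr)+(T-t-1)\int f\,d\mu$, one sees that $h_{t+1}$ is a truncation of the series in~\eqref{h2} plus a state-independent constant; since $\textup{Var}_\eta$ is invariant under additive constants, and since uniform geometric ergodicity together with the Weierstrass $M$-test makes those truncations converge to $h$ uniformly, it follows that $\textup{Var}_\eta h_{t+1}\to\textup{Var}_\eta h$ for every probability measure $\eta$ on state space, in particular for $\eta=K(\xi_t^i,\cdot)$ --- this is precisely the limit already recorded after~\eqref{h2}. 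Taking $T\to\infty$ in the last display then produces the stated formula. Apart from this passage to the limit, the argument is pure bookkeeping in conditioning, so I would expect the full write-up to be short.
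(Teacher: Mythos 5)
Your proposal is correct and follows essentially the same route as the paper's proof: condition the formula of Lemma~\ref{lem_mut_var} down to ${\mathcal F}_t$ by the tower property, factor out the common child weights via~\eqref{omegati}, regroup children by parent and use ${\mathbb E}[C_t^i\mid{\mathcal F}_t]=N_t(u)\omega_t^i/\omega_t(u)$, then let $T\to\infty$. Your final limit step (truncation of the series~\eqref{h2} plus a constant, invariance of the variance under additive constants, uniform convergence from uniform geometric ergodicity) is just a more explicit write-up of the paper's one-line appeal to~\eqref{ht} and~\eqref{h2}.
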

\begin{proof}
By definition of the selection step 
(see Algorithm~\ref{alg1}), 
\begin{equation}\label{betati}
{\mathbb E}[C_t^i|{\mathcal F}_t] = \frac{N_t(u)\omega_t^i}{\omega_t(u)}.
\end{equation}
 From Lemma~\ref{lem_mut_var}, 
\begin{align}\begin{split}\label{vis_mut_var}
&T^2{\mathbb E}\left[\left.\left(D_{t+1}-\hat{D}_{t}\right)^2\right|{\mathcal F}_{t}\right] 
=  {\mathbb E}\left[\left.\sum_{i=1}^N (\hat{\omega}_t^i)^2 \textup{Var}_{K(\hat{\xi}_t^i,\cdot)}h_{t+1}\right|{\mathcal F}_t\right] \\
&=  \sum_{u \in {\mathcal B}}\left(\frac{\omega_t(u)}{N_t(u)}\right)^2 {\mathbb E}\left[\left.\sum_{i: \hat{\xi}_t^i \in u} \textup{Var}_{K(\hat{\xi}_t^i,\cdot)}h_{t+1}\right|{\mathcal F}_t\right] \qquad \text{(using }\eqref{omegati}\text{)}\\
&=  \sum_{u \in {\mathcal B}}\left(\frac{\omega_t(u)}{N_t(u)}\right)^2 \sum_{i: {\xi}_t^i \in u}{\mathbb E}\left[\left.\sum_{j: \textup{par}(\hat{\xi}_t^j)  = \xi_t^i} [Kh_{t+1}^2(\hat{\xi}_t^j) - (Kh_{t+1}(\hat{\xi}_t^j))^2]\right|{\mathcal F}_t\right] \\
&=  \sum_{u \in {\mathcal B}}\frac{\omega_t(u)}{N_t(u)} \sum_{i: {\xi}_t^i \in u}\omega_t^i[Kh_{t+1}^2({\xi}_t^i) - (Kh_{t+1}({\xi}_t^i))^2] \qquad \text{(using }\eqref{betati}\text{)}.
\end{split}
\end{align}
In light of~\eqref{ht} and~\eqref{h2}, and using the fact that $$\textup{Var}_{K(\xi_t^i,\cdot)}h = Kh^2({\xi}_t^i) - (Kh({\xi}_t^i))^2,$$ 
we get the result from letting $T \to \infty$.
\end{proof}

We let $T \to \infty$ since
we are interested in long-time averages. 
The simpler formulas that result, 
as they involve $h$ instead of $h_t$, allow 
for strategies to estimate fixed
optimal bins {before beginning weighted 
ensemble simulations}, which we discuss 
more below. Thus for minimizing the limiting mutation variance, we consider the following optimization: 
\begin{align}\begin{split}\label{opt_allocation}
&\text{minimize }  \sum_{u \in {\mathcal B}}\frac{\omega_t(u)}{N_t(u)} \sum_{i: {\xi}_t^i \in u}\omega_t^i\textup{Var}_{K(\xi_t^i,\cdot)}h,\\
&\qquad  \text{ over all choices of }N_t(u) \in {\mathbb R}^+ \text{ such that }\sum_{u \in {\mathcal B}} N_t(u) = N,
\end{split}
\end{align}
where the bins ${\mathcal B}$ are fixed, 
and we temporarily allow the 
allocation $N_t(u)^{u \in {\mathcal B}}$ to be noninteger. A  Lagrange 
multiplier calculation shows that the solution to~\eqref{opt_allocation} is
\begin{equation}\label{Ntu}
N_t(u) = \frac{N\sqrt{\omega_t(u)\sum_{i:\xi_t^i \in u} \omega_t^i \textup{Var}_{K(\xi_t^i,\cdot)}h}}{\sum_{u \in {\mathcal B}}\sqrt{\omega_t(u)\sum_{i:\xi_t^i \in u} \omega_t^i \textup{Var}_{K(\xi_t^i,\cdot)}h}},
\end{equation}
provided the denominator above is nonzero.
{\em Note this solution is idealized, as $N_t(u)$ 
must always be an integer, and $h$ 
and $K$ are not known exactly.} We explain a practical 
implementation of~\eqref{Ntu} in
Algorithm~\ref{alg_opt_allocation}.

Our choice of the particle allocation will 
be based on~\eqref{Ntu}; see Algorithm~\ref{alg_opt_allocation}. Notice that at each time $t$ we only minimize one 
term, the summand in~\eqref{var1} corresponding to the mutation variance at time $t$, in the Doob decomposition in Theorem~\ref{thm_doob}. Later, when we optimize 
bins, we will optimize the summand in~\eqref{var2} corresponding to the 
selection variance at time $t$. 
In particular, we only minimize the
mutation and selection variances 
at the current time, and not the sum 
of these variances over all times.
In the $T \to \infty$ limit, 
we expect that
weighted ensemble reaches a steady state, 
provided the bin choice and allocation 
strategy ({\em e.g.} from Algorithms~\ref{alg_opt_allocation} and~\ref{alg_opt_bins}) 
do not change over time.
If the weighted ensemble indeed 
reaches a steady state, then the mutation 
and selection variances become stationary in $t$, making it reasonable to minimize them 
only at the current time. (Under 
appropriate conditions,
the variances in~\eqref{var1}-\eqref{var2} should 
also become {independent} over time $t$ in the 
$N \to \infty$ limit.  
See~\cite{del2004feynman} for 
related results 
in the context of sequential Monte Carlo.)

Note the term $\textup{Var}_{K(\xi_t^i,\cdot)}h = Kh^2(\xi_t^i) - (Kh(\xi_t^i))^2$ appearing in~\eqref{Ntu}. As 
discussed in Section~\ref{sec:notation}, {\em this 
variance tends to be large 
in regions between metastable 
sets}. The optimal 
allocation~\eqref{Ntu} 
favors putting children 
in such regions, increasing 
the likelihood that their 
descendants will visit 
both the adjacent metastable 
sets. See the Appendix for a 
connection between minimizing mutation 
variance and minimizing the 
sensitivity of the stationary 
distribution $\mu$ to perturbations 
of $K$.

\subsection{Mitigating selection variance}\label{sec:mit_sel}

We begin by observing that 
if bins are small, then 
so is the selection variance.
In particular, if each 
bin is a single point in state space,
then Lemma~\ref{lem_sel_var} 
shows that the selection 
variance is {\em zero}. One way, 
then, to get small selection 
variance is to have a lot of bins.
When simulations of the 
underlying Markov chain
are very expensive, however, 
we cannot afford a large 
number of bins; see Remark~\ref{rmk_bins}  
in Section~\ref{sec:params} below. As a result the 
bins are not so 
small, and we investigate 
the selection variance 
to 
decide how to construct them.

\begin{lemma}\label{lem_sel_var2}
The selection variance 
at time $t\ge 1$ satisfies
\begin{align}\begin{split}\label{sel_var2}
&\lim_{T \to \infty}T^2{\mathbb E}\left[\left.\left(\hat{D}_{t}-{D}_{t}\right)^2\right|{\mathcal F}_{t}\right] = \sum_{u \in {\mathcal B}} \left(\frac{\omega_t(u)}{N_t(u)}\right)^2\delta_t(u)\textup{Var}_{\eta_t(u)}Kh,
\end{split}
\end{align}
where $\eta_t(u)$ is defined in Lemma~\ref{lem_sel_var} and $h$ is defined in~\eqref{h}.
\end{lemma}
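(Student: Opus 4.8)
The plan is to reduce the statement to the finite-$T$ selection variance formula already recorded in Lemma~\ref{lem_sel_var}. Multiplying that formula through by $T^2$ cancels the prefactor $1/T^2$ and leaves $\sum_{u \in {\mathcal B}} \left(\omega_t(u)/N_t(u)\right)^2 \delta_t(u)\,\textup{Var}_{\eta_t(u)} Kh_{t+1}$. The quantities $\omega_t(u)$, $N_t(u)$, $\delta_t(u)$, and the probability measure $\eta_t(u)$ are all ${\mathcal F}_t$-measurable and carry no dependence on $T$, so conditioning on ${\mathcal F}_t$ turns this into a deterministic expression and confines the entire $T$-dependence to $h_{t+1} = \sum_{s=0}^{T-t-2} K^s f$ from~\eqref{ht}. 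Since ${\mathcal B}$ is finite, it therefore suffices to prove that $\textup{Var}_{\eta_t(u)} Kh_{t+1} \to \textup{Var}_{\eta_t(u)} Kh$ as $T\to\infty$ for every bin $u$ with $\delta_t(u)>0$, where $h$ is the Poisson solution~\eqref{h}; passing the limit through the finite sum then reproduces exactly the right-hand side of~\eqref{sel_var2}. Bins with $\delta_t(u)=0$ can be ignored, since for them the summand $\delta_t(u)\,\textup{Var}_{\eta_t(u)} Kh$ is read as $0$ on both sides.

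For the convergence of the variances I would decompose
\[
h_{t+1} = \sum_{s=0}^{T-t-2}\left(K^s f - \int f\,d\mu\right) + (T-t-1)\int f\,d\mu = h + c_T - r_T,
\]
with $c_T := (T-t-1)\int f\,d\mu$ a state-independent constant and $r_T := \sum_{s \ge T-t-1}\left(K^s f - \int f\,d\mu\right)$. Uniform geometric ergodicity of $K$ and the Weierstrass $M$-test -- the same estimate that justifies~\eqref{h2} -- give $\|r_T\|_\infty \to 0$, hence $\|Kr_T\|_\infty \le \|r_T\|_\infty \to 0$, while $\|Kh\|_\infty \le \|h\|_\infty < \infty$. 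Since $\textup{Var}_\eta$ is invariant under adding a constant to its argument, $\textup{Var}_{\eta_t(u)} Kh_{t+1} = \textup{Var}_{\eta_t(u)}(Kh - Kr_T)$; and since $|\textup{Var}_\eta(g+\epsilon) - \textup{Var}_\eta g|$ is, for any probability measure $\eta$, bounded by a quantity that vanishes as $\|\epsilon\|_\infty \to 0$ with $\|g\|_\infty$ held fixed, we obtain $\textup{Var}_{\eta_t(u)}(Kh - Kr_T) \to \textup{Var}_{\eta_t(u)} Kh$ as $T\to\infty$. This supplies the convergence required in the first paragraph.

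There is no serious obstacle in this argument; the only point needing a little care is the bookkeeping around the constant $c_T$, which diverges with $T$ -- this is precisely why one must work with $\textup{Var}_{\eta_t(u)} Kh_{t+1}$, which is blind to the shift by $c_T$, rather than with $Kh_{t+1}$ directly. The only genuine analytic ingredient, the uniform decay $\|K^s f - \int f\,d\mu\|_\infty \to 0$ that controls the tail $r_T$, is already furnished by the standing assumption that $K$ is uniformly geometrically ergodic (Section~\ref{sec:notation}); everything else is the algebra of inserting and removing constants inside a variance. Incidentally, the same computation establishes the two limits $\lim_{T\to\infty}\textup{Var}_\eta h_{t+1} = \textup{Var}_\eta h$ and $\lim_{T\to\infty}\textup{Var}_\eta Kh_{t+1} = \textup{Var}_\eta Kh$ claimed just before~\eqref{h}, and it is the $T\to\infty$ step already used in the proof of Proposition~\ref{prop_vis_var}.
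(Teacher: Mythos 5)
Your proposal is correct and follows essentially the same route as the paper, whose proof is the one-line observation that the result follows from Lemma~\ref{lem_sel_var}, \eqref{ht} and \eqref{h2} by letting $T \to \infty$. What you add is exactly the justification the paper leaves implicit -- that all $T$-dependence sits in $Kh_{t+1}$, and that $\textup{Var}_{\eta_t(u)}Kh_{t+1} \to \textup{Var}_{\eta_t(u)}Kh$ because the divergent constant $(T-t-1)\int f\,d\mu$ is killed by shift-invariance of the variance while the tail $r_T$ vanishes uniformly by geometric ergodicity.
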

\begin{proof}
From Lemma~\ref{lem_sel_var},~\eqref{ht} and~\eqref{h2},  letting $T \to \infty$ gives the result.
\end{proof}

We choose to mitigate selection variance by choosing 
bins so that~\eqref{sel_var2} is small. This likely
requires some sort of search in bin space.
For simplicity, we assume that the bins 
do not change in time and are chosen at 
the start of Algorithm~\ref{alg1}. 
Of course it is possible
to update bins adaptively, 
at a frequency that 
depends on how the cost of bin 
searches compares to that of 
particle evolution.

We will minimize an {\em agnostic} variant 
of~\eqref{sel_var2}, 
for which we make no assumptions about 
$N_t(u)$, $\omega_t(u)$ and $\delta_t(u)$. 
This allows us to optimize 
fixed bins using a 
time-independent 
objective function, without 
taking the particle allocation into 
account.
Our agnostic optimization also 
is not specific to residual 
resampling. Indeed, though the precise formula 
for the selection variance depends on 
the resampling method, 
the selection variance should 
always contain terms of the 
form $\text{Var}_\eta Kh$, 
where $\eta$ are probability 
distributions in the individual 
weighted ensemble bins; see~\cite{aristoff2019ergodic}. It 
is exactly these terms that
we choose to minimize.

Thus for our agnostic selection variance minimization, we let
$$\eta_u^{\textup{unif}}(d\xi) = \frac{\mathbbm{1}_{\xi \in u}\,d\xi}{\int \mathbbm{1}_{\xi \in u}\,d\xi}$$ 
be the 
uniform distribution in 
bin $u \in {\mathcal B}$, and consider the following problem:
\begin{align}\begin{split}\label{opt_bins}
&\text{minimize } \sum_{u \in {\mathcal B}}\text{Var}_{\eta_u^{\textup{unif}}} Kh  \\
& \qquad \text{over choices of } {\mathcal B,} \text{ subject to the constraint } \#{\mathcal B} = M.
\end{split}
\end{align}
Like~\eqref{opt_allocation}, {\em this is 
idealized because we cannot 
directly access $K$ or $h$.} We 
describe a practical implementation of~\eqref{opt_bins} 
in Algorithm~\ref{alg_opt_bins}. 
Informally, solutions to~\eqref{opt_bins} are characterized 
by the property that, inside each 
individual bin $u$, 
the value of $Kh$ does 
not change very much.

Our choice of bins is based on~\eqref{opt_bins}. Here $M$ is the desired total number of bins. 
Our agnostic perspective leads us to 
use the uniform distribution $\eta_u^{\textup{unif}}$ in 
each bin $u$. When bins are formed 
from combinations of a fixed collection 
of microbins,~\eqref{opt_bins} is a discrete optimization problem that 
usually lacks a closed form solution. 
Algorithm~\ref{alg_opt_bins} below 
solves a discrete version of~\eqref{opt_bins} by simulated 
annealing. 
Because of the similarity of~\eqref{opt_bins} with the 
$k$-means 
problem~\cite{kmeans}, we expect that 
there are
more efficient methods, 
but this is not the focus 
of the present work.

\section{Microbins and parameter choice}\label{sec:params}

To approximate the solutions to~\eqref{opt_allocation} 
and~\eqref{opt_bins}, we use {\em microbins} to gain information about $K$ and $h$. 
The collection ${\mathcal M}{\mathcal B}$ of 
microbins is a
finite partition of state space that refines 
the weighted ensemble bins  
${\mathcal B}$, in the sense that 
every element of ${\mathcal B}$ 
is a union of elements of ${\mathcal M}{\mathcal B}$.
Thus each bin is comprised of a number of microbins, 
and each microbin is inside exactly one bin.
The idea is to use exploratory simulations, 
over short time horizons, to approximate $K$ and $h$ by observing transitions 
between microbins.

In more detail,  
we estimate the probability to transition from microbin 
$p \in {\mathcal M}{\mathcal B}$ to microbin $q \in {\mathcal M}{\mathcal B}$ by a matrix $\tilde{K} = (\tilde{K}_{pq})$,
\begin{equation}\label{tildeK}
\tilde{K}_{pq} \approx \dfrac{ \iint \nu(d\xi)K(\xi,d\xi')\mathbbm{1}_{\xi \in p,\,\xi' \in q}}{\int \nu(d\xi)\mathbbm{1}_{\xi \in p}},
\end{equation}
and we estimate $f$ on microbin $p \in {\mathcal M}{\mathcal B}$ by a vector $\tilde{f} = (\tilde{f}_p)$,
\begin{equation}\label{tildef}
\tilde{f}_p \approx \frac{\int f(\xi)\mathbbm{1}_{\xi \in p}\,\nu(d\xi)}{\int \mathbbm{1}_{\xi \in p}\nu(d\xi)}.
\end{equation}
Here $\nu$ is some convenient measure, 
for instance an empirical measure 
obtained from preliminary weighted 
ensemble simulations. 
This strategy echoes work in the Markov state model community~\cite{husic2018markov,pande2010everything,
sarich2010approximation,schutte2013metastability}.

For small enough microbins, 
we could replace $\nu$ in~\eqref{tildeK} with any other measure without changing too much 
the value of the estimates on the  
right hand sides of~\eqref{tildeK} and~\eqref{tildef}. 
Moreover, if $f$ is the characteristic function of a 
microbin, then
$\nu$ could be replaced with 
any measure supported in microbin $p$ {\em without changing at all} the value of
the right hand side of~\eqref{tildef}. 
This is the case for the mean first passage time 
problem mentioned in the 
Introduction, and fleshed out in 
the numerical example in Section~\ref{sec:numerics}: there, $f$ is 
the characteristic function 
of the target set, which can 
be chosen to be a microbin.

\begin{algorithm}\caption{Optimizing the particle allocation}
Given the particles and weights $(\xi_0^i,\omega_0^i)^{i=1,\ldots,N_{\textup{init}}}$ at $t = 0$, or $(\xi_t^i,\omega_t^i)^{i=1,\ldots,N}$ at $t\ge 1$:
\begin{itemize}[leftmargin=20pt]
\item
Define the 
following approximate solution to~\eqref{opt_allocation}:
\begin{equation}\label{star}
\tilde{N}_t(u) = \frac{N\sqrt{\omega_t(u) \sum_{i:\xi_t^i \in u} \omega_t^i [{\tilde K}{\tilde h}^2 - ({\tilde K}\tilde{h})^2]_{p(\xi_t^i)}}}{\sum_{u \in {\mathcal B}}\sqrt{\omega_t(u) \sum_{i:\xi_t^i \in u} \omega_t^i [{\tilde K}{\tilde h}^2 - ({\tilde K}\tilde{h})^2]_{p(\xi_t^i)}}},
\end{equation}
where $p(\xi_t^i) \in {\mathcal M}{\mathcal B}$ is the microbin containing $\xi_t^i$.

\item 
Let $\tilde{N}$ count the occupied bins,
\begin{equation*}
\tilde{N} = \sum_{u \in {\mathcal B}} \mathbbm{1}_{\omega_t(u)>0}.
\end{equation*}
\item Let $R_t(u)^{u \in {\mathcal B}}$ be $N-{\tilde N}$ samples from the distribution 
$\{\tilde{N}_t(u)/N: u \in {\mathcal B}\}$. \item In Algorithm~\ref{alg1}, define the particle allocation as
\begin{equation*}
N_t(u) = {\mathbbm{1}}_{\omega_t(u)>0} + R_t(u).
\end{equation*}
\end{itemize}
If the denominator of~\eqref{star} is $0$,  we set $N_t(u) =  \#\{i:\xi_t^i \in u\}$. 
\label{alg_opt_allocation}
\end{algorithm}

With $\tilde{f}$ and the microbin-to-microbin transition matrix $\tilde{K}$ in hand, we can 
obtain an approximate solution $\tilde{h}$ to the Poisson equation~\eqref{h}, 
simply by replacing $K$, $f$ and $\mu$ in that equation 
with, respectively, $\tilde{K}$, $\tilde{f}$ and the stationary 
distribution $\tilde{\mu}$ of $\tilde{K}$ (we assume $\tilde{K}$ is aperiodic and irreducible). 
That is, $\tilde{h}$ solves
\begin{equation}\label{tildeh}
(\tilde{I} - \tilde{K})\tilde{h} = \tilde{f} - \tilde{f}^T\tilde{\mu}\tilde{\mathbbm{1}}, \qquad {\tilde h}^T\tilde{\mu} = 0,
\end{equation}
where $\tilde{I}$ and $\tilde{\mathbbm{1}}$ 
are the identity matrix and all ones 
column vector of the appropriate sizes, 
and $\tilde{f}$, $\tilde{\mu}$ and $\tilde{h}$ 
are column vectors.
Then we can approximate the solutions to~\eqref{opt_allocation} and~\eqref{opt_bins}
by simply replacing $K$ and $h$ in those optimization 
problems 
with $\tilde{K}$ and $\tilde{h}$. 
See Algorithms~\ref{alg_opt_allocation} and~\ref{alg_opt_bins} 
for details. 
We can also use 
$\tilde{\mu}$ to 
initialize weighted 
ensemble; see 
Algorithm~\ref{alg_initialize}.

We have in mind that the microbins are constructed using 
ideas from Markov state modeling~\cite{husic2018markov,pande2010everything,
sarich2010approximation,schutte2013metastability}. In this setup, 
the microbins are simply the Markov states. 
These could be determined from a clustering analysis ({\em e.g.}, using $k$-means~\cite{kmeans})
from preliminary weighted ensemble 
simulations with short time horizons. 
The resulting Markov state model 
can be crude: it will be used only for 
choosing parameters, and weighted ensemble is 
exact no matter the parameters. 
Indeed if our Markov state model was very 
refined, it could be used 
directly to estimate $\int f \,d\mu$. 
In practice, we expect our crude model could 
estimate $\int f \,d\mu$ with significant bias.
In our 
formulation, 
a bad set of parameters may lead to large variance, 
but there is never any bias. 
In short, the Markov state model should 
be good enough to pick reliable 
weighted ensemble parameters, 
but not necessarily good enough 
to accurately estimate $\int f \,d\mu$.

\begin{remark}\label{rmk_bins} 
We distinguish microbins from bins 
because {the number of weighted ensemble bins 
is limited by the number of 
particles we can afford to simulate 
over the time horizon needed to reach $\mu$}. 
As an extreme case, 
suppose we have many more bins 
than particles, so that 
all bins contain $0$ 
or $1$ particles at almost every time. 
Then, because Algorithm~\ref{alg1} 
requires at least one child 
per occupied bin, parents almost never
have more than one child, 
and we recover
direct Monte Carlo. This 
condition, that the collection of 
parents in 
a given bin must have at least one 
child, is essential for the 
stability of long-time calculations~\cite{aristoff2019ergodic}. 
As a result, too many bins leads 
to poor weighted ensemble performance.
A very rough rule of thumb is that 
the number $M$ of bins
should be not too much larger than the number 
$N$ of particles. 
\end{remark}

\begin{algorithm}\caption{Optimizing the bins}
Choose an initial collection ${\mathcal B}$ of bins.
Define an objective function on bin space,
\begin{equation}\label{objective}
{\mathcal O}({\mathcal B}') = \sum_{u \in {\mathcal B}'} \text{Var} (\tilde{K}\tilde{h}|_u),
\end{equation}
where $\tilde{K}\tilde{h}|_u$ is the restriction of 
$\tilde{K}\tilde{h}$ to $\{p \in {\mathcal M}{\mathcal B}: p \in u\}$, and $\text{Var} (\tilde{K}\tilde{h}|_u)$ is the usual 
vector population variance. Choose an annealing parameter $\alpha>0$, 
set ${\mathcal B}_{opt} = {\mathcal B}$, and iterate the 
following for a user-prescribed number of steps:
\vskip5pt
\begin{itemize}
\item[1.] Perturb ${\mathcal B}$ to get new bins ${\mathcal B}'$.

(Say by moving a microbin from one bin to another bin). 
\item[2.] With probability $\min\{1,\exp[\alpha({\mathcal O}(\mathcal{B}) - {\mathcal O}(\mathcal{B'}))]\}$, 
set ${\mathcal B} = {\mathcal B}'$.
\item[3.] If ${\mathcal O}({\mathcal B}) < {\mathcal O}({\mathcal B}_{opt})$, then update ${\mathcal B}_{opt} = {\mathcal B}$. Return to Step 1.
\end{itemize}
\vskip5pt
Once the bin search is complete, the output is ${\mathcal B} = {\mathcal B}_{opt}$.
\label{alg_opt_bins}
\end{algorithm} 

The number of weighted 
ensemble bins is limited 
by the number $N$ of particles. 
(In the references in the Introduction, 
$N$ is usually on the order of $10^2$ to $10^3$.)
The number of microbins, on the 
other hand, is 
limited primarily by the 
cost of the sampling 
that produces $\tilde{h}$ and $\tilde{K}$. 
The microbins could 
be computed by post-processing 
exploratory 
weighted ensemble data generated 
using larger bins. The 
{quality} of the microbin-to-microbin 
transition matrix $\tilde{K}$
depends on the microbins and the number of particles 
used in these exploratory 
simulations. 
But these exploratory simulations, 
compared to our steady-state 
weighted ensemble simulations,  
could use more particles 
as their time horizons 
can be much shorter. As a 
result the number of microbins 
can be much greater than the number of bins.

In Algorithm~\ref{alg_opt_bins}, 
we could enforce an additional 
condition 
that the bins must be connected
regions in state space. 
We do this 
in our implementation of Algorithm~\ref{alg_opt_bins} in Section~\ref{sec:numerics} below. Traditionally, bins are connected, 
not-too-elongated
regions, {\em e.g.} Voronoi 
regions. Bins are chosen this way 
because resampling in 
bins with distant particles can
lead to a large variance in  
the weighted ensemble. However, {\em since 
we are employing weighted ensemble 
only to compute $\int f\,d\mu$ for a single observable 
$f$}, a large variance 
in the full ensemble can 
be tolerated so long as 
the variance assocated with 
estimating $\int f\,d\mu$ is 
still small. This could 
be achieved with 
disconnected or elongated 
bins, or even a non-spatial 
assignment of particles to 
bins (based {\em e.g.} on the values 
of $\tilde{K}\tilde{h}$ on the 
microbins containing the particles). We leave a more complete 
investigation to future work.

\begin{algorithm}\caption{Initializing weighted ensemble}
After the preliminary simulations that produce 
a collection of particles and weights 
$(\xi_0^i,\omega^i)^{i=1,\ldots,N_{\textup{init}}}$, 
together with approximations $\tilde{K}$, 
$\tilde{\mu}$, $\tilde{f}$ and $\tilde{h}$ 
of $K$, $\mu$, $f$, and $h$: 
\vskip5pt
\begin{itemize}[leftmargin=20pt]
\item Adjust the weights of the particles 
in each microbin $p$ according to $\tilde{\mu}_p$:
\begin{equation*}
\omega_0^i = \omega^i \frac{\tilde{\mu}_p}{\sum_{j:\xi_0^j \in p}\omega^j}, \qquad \text{if }\xi_0^i \in p.
\end{equation*}
There should be at least one initial particle in each microbin, 
$$\sum_{j:\xi_0^j \in p}\omega^j>0, \qquad p \in {\mathcal M}{\mathcal B}.$$

\item Proceed to the selection step 
of Algorithm~\ref{alg1} at 
time $t = 0$, with the initial particles $\xi_0^1,\ldots,\xi_0^{N_{\textup{init}}}$ having the
adjusted weights $\omega_0^1,\ldots,\omega_0^{N_{\textup{init}}}$.
\end{itemize}
\vskip5pt
The number, $N_{\textup{init}}$, of initial particles can be much greater than the number, $N$, of particles in the weighted ensemble simulations of Algorithm~\ref{alg1}.
\label{alg_initialize}
\end{algorithm} 

\subsection{Initialization}
Note that the steady state $\tilde{\mu}$ 
of $\tilde{K}$ can be 
used to precondition the 
weighted ensemble simulations, 
so that they start closer to the true
steady state. This basically amounts 
to adjusting the weights of the
initial particles so that 
they match $\tilde{\mu}$. 
This is called {\em reweighting} 
in the weighted ensemble literature~\cite{bhatt2010steady,jeremy,suarez,zuckerman}. 
One way to do this 
is the following. Take initial 
particles and weights 
$(\xi_0^i,\omega^i)^{i=1,\ldots,N_{\textup{init}}}$ 
from the preliminary simulations 
that define 
$\tilde{K}$, $\tilde{\mu}$, $\tilde{f}$ and 
$\tilde{h}$. These initial particles can 
be a large subsample from these 
simulations; in particular 
we can have an initial number 
of particles $N_{\textup{init}}\gg N$ 
much greater than the number of 
particles in weighted ensemble 
simulations. (The large number 
can be obtained by sampling at 
different times along the
the preliminary simulation particle ancestral lines.) {\em We require that there 
is at least one initial particle in each 
microbin.} The weights $(\omega^i)^{i=1,\ldots,N_{\textup{init}}}$ of these particles 
are adjusted using $\tilde{\mu}$ to 
get new weights $(\omega_0^i)^{i=1,\ldots,N_{\textup{init}}}$, such 
that the total adjusted weight in 
each microbin matches the value 
of $\tilde{\mu}$ on the same microbin.
Then these $N_{\textup{init}}\gg N$ 
initial particles are fed into the first ($t = 0$) 
selection step of Algorithm~\ref{alg1}.
This selection step prunes the number 
of particles to a manageable 
number, $N$, for the rest of the 
main weighted ensemble simulation.
See Algorithm~\ref{alg_initialize} 
for a precise description of this 
initialization.

\subsection{Gain over naive parameter choices}

The gain of 
optimizing parameters, compared 
to naive parameter choices 
or direct Monte Carlo, comes 
from the larger number of particles 
that optimized weighted ensemble 
puts in important regions 
of state space, compared to 
a naive method. {\em These 
important regions are exactly 
the ones identified by $h$}; 
roughly speaking they are 
regions $R$ where the 
variance of $h$ is large. 
A rule of thumb is 
that the variance can decrease by a 
factor of up to 
\begin{equation}\label{gain}
\frac{\text{average }\#\text{ of particles in }R \text{ with optimized parameters}}{\text{average }\#\text{ of particles in }R \text{ with naive method}}.
\end{equation}

To see why, consider the mutation variance from 
Proposition~\ref{prop_vis_var},
\begin{align}\begin{split}\label{vis_mut_var2}
\lim_{T \to \infty} T^2{\mathbb E}\left[\left.\left(D_{t+1}-\hat{D}_{t}\right)^2\right|{\mathcal F}_{t}\right] 
=    \sum_{u \in {\mathcal B}}\frac{\omega_t(u)}{N_t(u)} \sum_{i: {\xi}_t^i \in u}\omega_t^i\textup{Var}_{K(\xi_t^i,\cdot)}h.
\end{split}
\end{align}
The contribution to this mutation variance at time $t$ from bin $u$ is 
\begin{equation*}
\frac{\omega_t(u)}{N_t(u)} \sum_{i: {\xi}_t^i \in u}\omega_t^i\textup{Var}_{K(\xi_t^i,\cdot)}h.
\end{equation*}
Increasing $N_t(u)$ by some 
factor decreases the mutation variance from bin $u$ at 
time~$t$ by the same factor. 
The mutation variance can be 
reduced by a factor of almost~\eqref{gain}  
if $N_t(u)$ is 
increased by the  
factor~\eqref{gain} in the 
bins $u$ where $\textup{Var}_{K(\xi_t^i,\cdot)}h$ 
is large,
and if $\textup{Var}_{K(\xi_t^i,\cdot)}h$ is small enough in the other 
bins that decreasing the allocation in those bins
does not significantly 
increase the mutation variance.

Of course 
the variance formulas in 
Lemmas~\ref{lem_mut_var} and 
Lemma~\ref{lem_sel_var} can, 
in principle, 
more precisely describe 
the gain, although it is 
difficult to accurately estimate the 
values of these variances a priori
outside of the $N \to \infty$ 
limit. Since we focus on relatively small $N$, 
we do not go in this analytic direction.
Instead we numerically illustrate 
the improvement from optimizing 
parameters in Figures~\ref{fig_vary_bins} and~\ref{fig_optimization_data}.

\subsection{Adaptive methods}

We have proposed handling the 
optimizations~\eqref{opt_allocation} and~\eqref{opt_bins} 
by approximating $K$ and $h$ with 
${\tilde{K}}$ and $\tilde{h}$, where 
the latter are built by observing microbin-to-microbin 
transitions and solving the appropriate 
Poisson problem. Since $K$ and $h$ are fixed in time 
and we are doing long-time calculations, 
it is natural to estimate them adaptively, for instance 
via stochastic approximation~\cite{kushner2003stochastic}.
Thus both~\eqref{opt_allocation} 
and~\eqref{opt_bins} could be solved 
adaptively, at least in principle.
Depending 
on the number of microbins, it 
may be relatively cheap to compute  
$h$ compared to the cost of evolving 
particles. If this is the case,
it is natural to solve~\eqref{opt_allocation} 
on the fly. We could also perform 
bin searches intermittently, depending 
on their cost.

\section{Numerical illustration}\label{sec:numerics}
In this section we illustrate 
the optimizations in 
Algorithms~\ref{alg_opt_allocation} 
and~\ref{alg_opt_bins}, for a simple example of 
a mean first passage time computation. Consider 
{\em overdamped Langevin dynamics} 
\begin{equation}\label{ovd_lang}
dX_t = -V'(X_t)\,dt + \sqrt{2\beta^{-1}}dW_t,
\end{equation}
where $\beta = 5$, $(W_t)_{t \ge 0}$ is a standard Brownian motion, and the potential energy is 
\begin{equation*}
V(x) = \begin{cases} 5(x-7/12)^2 + 0.15\cos(240\pi x), & x < 7/12\\
-1 - \cos(12\pi x) + 0.15 \cos(240 \pi x), & x \ge 7/12\end{cases}.
\end{equation*}
See Figure~\ref{fig_V_h_pi}. We impose reflecting boundary conditions on the interval $[0,1]$. 
We will estimate the mean first passage time 
of $(X_t)_{t \ge 0}$ from $1/2$ to $[119/120,1]$ 
using the Hill relation~\cite{aristoff2016analysis,hill2005free}. 
The Hill relation reformulates the 
mean first passage time as a steady-state 
average, as we explain below.

\begin{figure}
\includegraphics[width=13cm]
{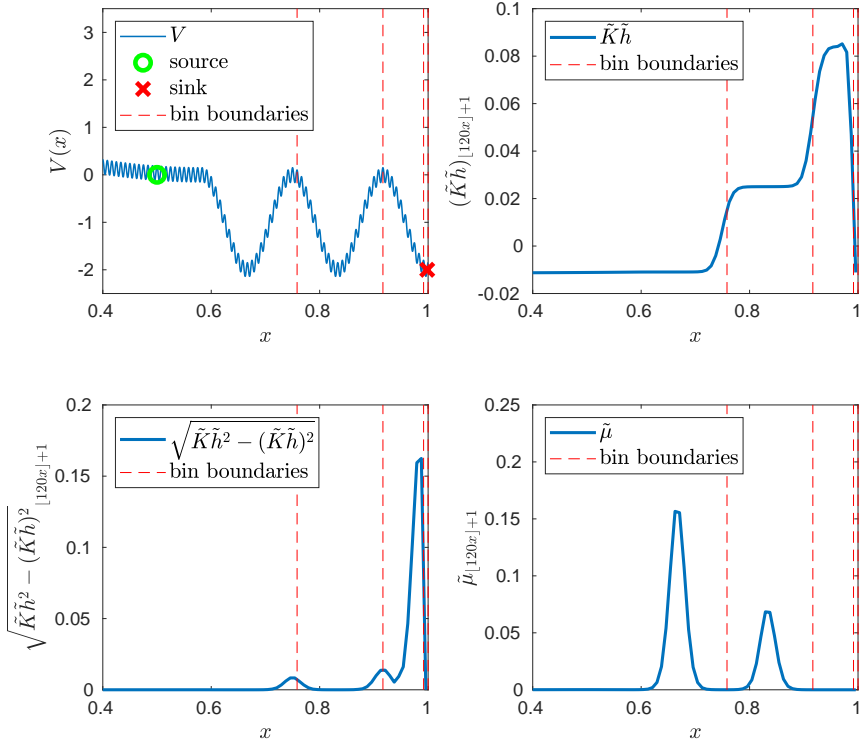}
\vskip-10pt
\caption{Using 
Algorithm~\ref{alg_opt_bins} 
to compute the weighted ensemble bins ${\mathcal B}$ 
when the number of bins is $M = 4$. We use $10^6$ iterations 
of Algorithm~\ref{alg_opt_bins} with 
$\alpha = 10^5$. Top left: Potential energy $V$ and bin boundaries when $M = 4$. 
Top right: The vector ${\tilde K}{\tilde h}$ defining the objective function in Algorithm~\ref{alg_opt_bins}, where $\tilde{h}$ is 
the approximate Poisson solution. Note 
that ${\tilde K}{\tilde h}$ is nearly 
constant on each superbasin. 
Bottom left: (square root of) the vector $\tilde{K}\tilde{h}^2 - (\tilde{K}\tilde{h})^2$ involved in the mutation 
variance optimization in 
Algorithm~\ref{alg_opt_allocation}. Bottom right: Approximate steady state 
distribution $\tilde{\mu}$.  All plots have been cropped at $x>0.4$, where the values of  
$\tilde{K}\tilde{h}^2 - (\tilde{K}\tilde{h})^2$ and $\tilde{\mu}$ 
are neglibigle and ${\tilde K}{\tilde h}$ is nearly constant.}
\label{fig_V_h_pi}
\end{figure}

We choose $120$ uniformly spaced 
microbins between $0$ and $1$,
$${\mathcal M}{\mathcal B} = \{[(p-1)/120,p/120]:i=1,\ldots,120\}.$$
(They are not actually disjoint, but they do not overlap so this is unimportant.) 

The microbins correspond to 
the {\em basins of attraction of $V$}.
A basin of attraction of $V$ is a 
set of initial conditions $x(0)$ for which
$dx(t)/dt = - V'(x(t))$ has a unique long-time limit. The microbins 
comprise $3$ larger 
{\em metastable sets}, defined 
in Section~\ref{sec:notation} above. These larger metastable 
sets, each comprised of many smaller 
basins of attraction, will be 
called {\em superbasins}. 
{\em The microbins do not need to be basins of 
attraction: they only need to 
be sufficiently ``small''
to give useful estimates of $K$ 
and $h$.} 
We choose microbins in this 
way to illustrate the qualitative 
features we expect from 
Markov state modeling, where the 
Markov states (or our microbins) 
are often basins 
of attraction. In 
this case, the bins 
might be clusters of Markov 
states corresponding 
to superbasins.

\begin{figure}
\includegraphics[width=13cm]
{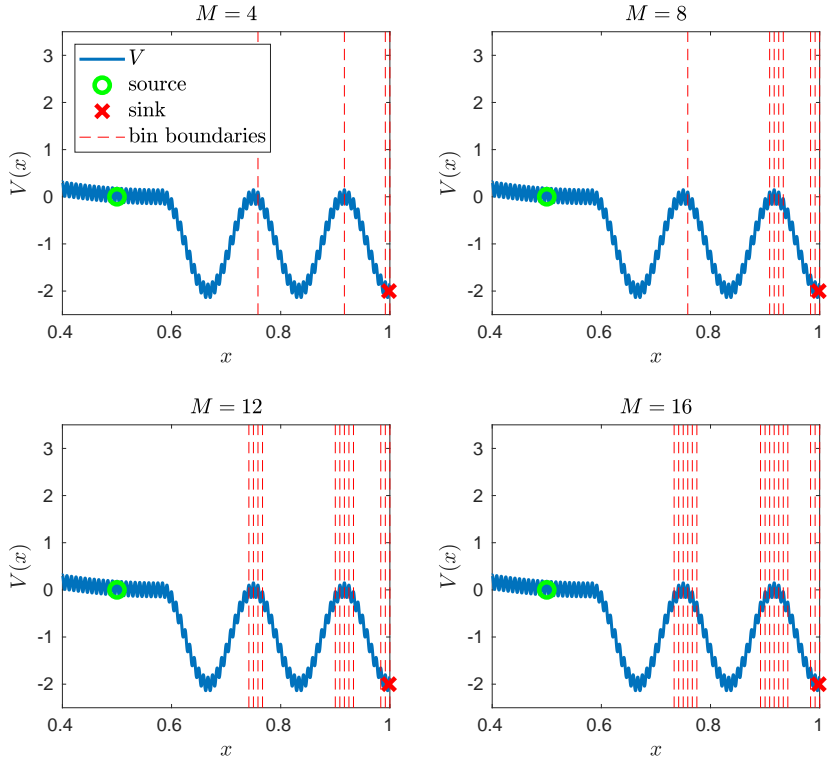}
\vskip-10pt
\caption{Increasing the value of $M$ 
in Algorithm~\ref{alg_opt_bins}. Plotted 
are weighted ensemble bins ${\mathcal B}$ computed 
using Algorithm~\ref{alg_opt_bins} with 
$M = 4,8,12,16$. For each 
value of $M$, we use $10^6$ iterations 
of Algorithm~\ref{alg_opt_bins}, with 
$\alpha$ tuned between $10^5$ and $10^6$. 
Note that with increasing 
$M$, additional bins are initially devoted 
to resolving the energy barrier 
between the two rightmost superbasins. 
Since the observable $f$ is the 
rightmost microbin, this is the 
most important energy barrier for 
the bins to resolve. 
Note that the multiple adjacent small 
bins for $M = 16$ correspond to the steepest gradients of $\tilde{K}\tilde{h}$
in the top right of Figure~\ref{fig_V_h_pi}. All plots 
have been cropped at $x>0.4$.}
\label{fig_bin_boundaries}
\end{figure}

\begin{figure}
\centering
\includegraphics[width=13cm]
{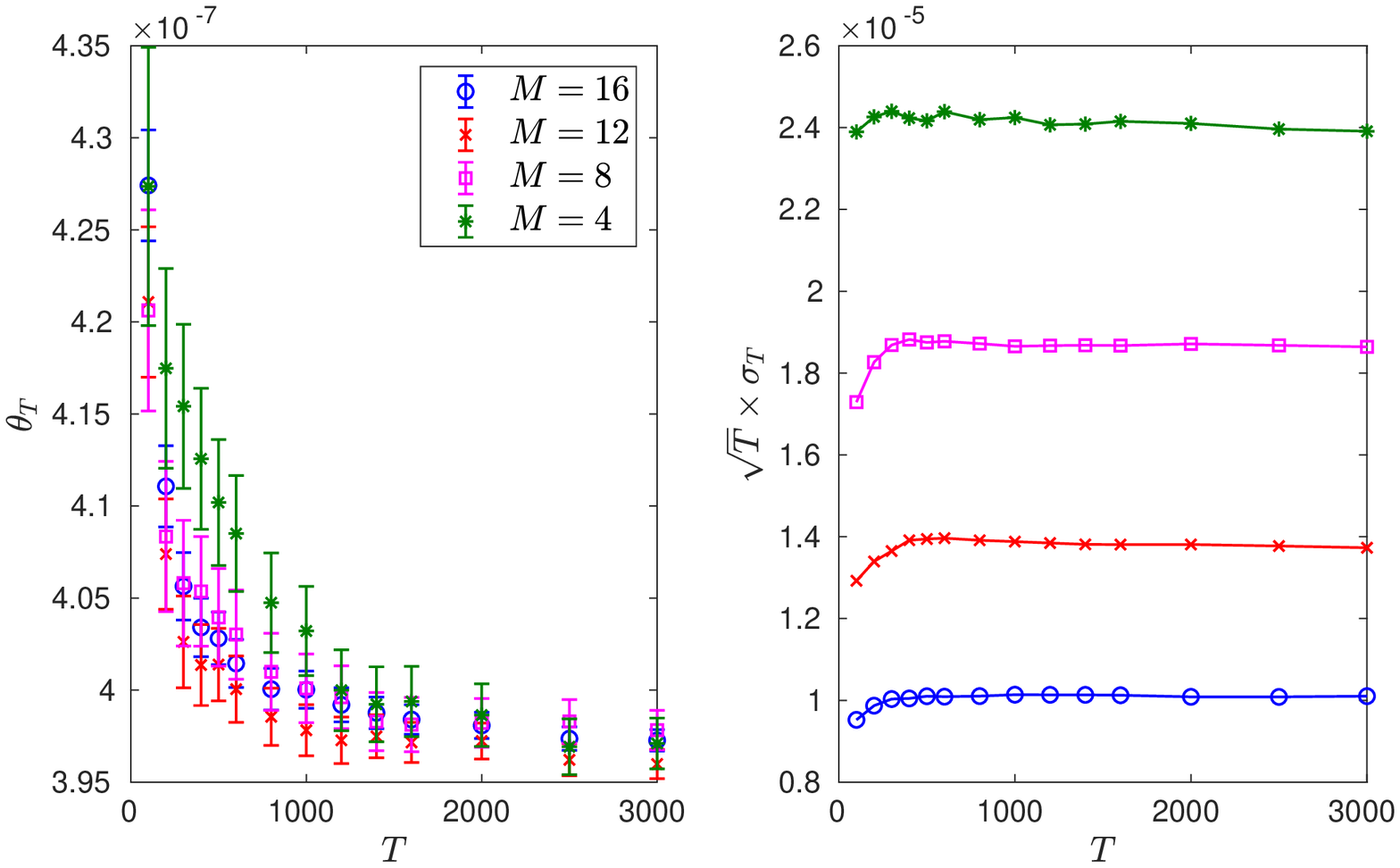}
\vskip-10pt
\caption{Varying the number $M$ 
of weighted ensemble bins 
in Algorithm~\ref{alg_opt_bins}.
Left: Weighted ensemble running means $\theta_T$ 
vs. $T$ for Algorithm~\ref{alg1} 
with the optimal allocation 
and binning of Algorithms~\ref{alg_opt_allocation} and~\ref{alg_opt_bins}, when $M = 4,8,12,16$. Values 
shown are averages over 
$10^5$ independent 
trials. Error bars 
are $\sigma_T/\sqrt{10^5}$, where 
$\sigma_T$ are the empirical 
standard deviations. Right: 
Scaled empirical standard 
deviations $\sqrt{T}\times \sigma_T$ 
vs. $T$ in the same setup. 
We use $N = 40$ particles, and the
bins correspond 
exactly to the ones in Figure~\ref{fig_bin_boundaries}. More bins 
is not always better, since with 
too many bins we return to the 
direct Monte Carlo regime; 
see Remark~\ref{rmk_bins}.}
\label{fig_vary_bins}
\end{figure}

The kernel $K$ is defined as follows. 
First, let $K_{\delta t}$ be an 
Euler-Maruyama time discretization of~\eqref{ovd_lang} with time step 
$\delta t = 2 \times 10^{-5}$. 
We introduce a {\em sink} at the target 
state $[119/120,1]$ that recycles at 
a {\em source} $x = 1/2$ via 
\begin{equation*}
\bar{K}_{\delta t}(x,dy) = \begin{cases} K_{\delta t}(\frac{1}{2},dy), & x \in [119/120,1] \\
K_{\delta t}(x,dy), & x \notin [119/120,1]\end{cases}.
\end{equation*}
Then we define $K$ as a $\Delta t$-skeleton of $\bar{K}_{\delta t}$ where $\Delta t = 10$, 
\begin{equation}\label{between}
K = \bar{K}_{\delta t}^{10}.
\end{equation}

This just means we take $10$ Euler-Maruyama~\cite{kloeden} time steps in the mutation 
step of Algorithm~\ref{alg1}. The 
Hill relation~\cite{aristoff2016analysis} 
shows that, if $(\bar{X}_n)_{n \ge 0}$ is a Markov chain with kernel either ${K}_{\delta t}$ or $\bar{K}_{\delta t}$ and $\bar{\tau} = \inf\{n\ge 0:\bar{X}_n \in [119/120,1]\}$, then 
\begin{equation*}
{\mathbb E}[\bar{\tau}|\bar{X}_0 = 1/2] = \frac{1}{\mu([119/120,1])},
\end{equation*}
where $\mu$ is the stationary distribution of $K$. Thus if $\tau = \inf\{t>0:X_t \in [119/120,1]\}$, 
\begin{equation*}
{\mathbb E}[\tau|X_0 = 1/2] \approx \frac{ \delta t}{\mu([119/120,1])}.
\end{equation*}
By construction $\mu([119/120,1])$ is small 
(on the order $10^{-7}$), so it must be estimated with 
substantial precision to 
resolve the mean first passage 
time.

We will estimate the 
mean first passage time 
from $x = 1/2$ to $x \in [119/120,1]$, 
the latter being the target 
state and rightmost 
microbin. Thus, we define $f$ 
as the characteristic function of 
this microbin,
$f = \mathbbm{1}_{[119/200,1]}$, so that
$$\theta_T \approx \int f\,d\mu = \mu([119/120,1]).$$
Weighted ensemble then estimates the mean first passage time via 
\begin{equation}\label{MFPT2}
\textup{Mean first passage time } = {\mathbb E}[\tau|X_0 = 1/2] \approx \frac{\delta t}{\theta_T}.
\end{equation}

We illustrate Algorithm~\ref{alg1} combined 
with Algorithms~\ref{alg_opt_allocation}-\ref{alg_opt_bins} in Figures~\ref{fig_V_h_pi}-\ref{fig_optimization_data}. For Algorithms~\ref{alg_opt_allocation} and~\ref{alg_opt_bins}, 
to construct $\tilde{K}$, $\tilde \mu$, 
$\tilde{f}$ and $\tilde{h}$ as in 
Section~\ref{sec:params}, we compute the matrix 
$\tilde{K}$ using $10^4$ trajectories 
starting from each microbin's midpoint, 
and we define $\tilde{f}_{120} = 1$, $\tilde{f}_p = 0$ for $1 \le p \le 119$. The $p$th 
rows of $\tilde{K}$, $\tilde \mu$, 
$\tilde{f}$ and $\tilde{h}$ correspond 
to the microbin $[(p-1)/120,p/120]$. 
In Algorithm~\ref{alg_opt_bins}, 
to simplify visualization 
we enforce a condition that 
the bins must be connected 
regions. We initialize all our simulations 
with Algorithm~\ref{alg_initialize} where $N_{\textup{init}} = 120$,
$\xi_0^i = (i-1/2)/120$, and $\omega^i = 1/120$.

In Figure~\ref{fig_V_h_pi}, we show the bins 
resulting from Algorithm~\ref{alg_opt_bins}, 
and plot the terms $\tilde{K}\tilde{h}$ and
$\tilde{K}\tilde{h}^2 - (\tilde{K}\tilde{h})^2$ 
appearing in the optimizations in 
Algorithms~\ref{alg_opt_allocation} 
and~\ref{alg_opt_bins}, along with 
the approximate steady state $\tilde{\mu}$. Note that 
the bins
resolve the superbasins of $V$. 
In Figure~\ref{fig_bin_boundaries}, we 
explore what happens when the number 
$M$ of bins increases in Algorithm~\ref{alg_opt_bins}, finding that 
the bins begin to resolve the 
regions between superbasins, 
favoring regions closer to the support 
of $f$. As $M$ increases, 
the optimal allocation 
leads to particles having 
more children when they are near 
the dividing surfaces between the superbasins; to see why, compare the particle allocation 
in
Algorithm~\ref{alg_opt_allocation} with
the bottom left of Figure~\ref{fig_V_h_pi}.

In Figure~\ref{fig_vary_bins}, we illustrate 
weighted ensemble with the optimized 
allocation and binning 
of Algorithms~\ref{alg_opt_allocation} 
and~\ref{alg_opt_bins} when 
the number of bins increases 
from $M = 4$ to $M = 16$. 
Observe that $M = 4$ bins is not enough 
to resolve the regions between the superbasins, 
but we still get a substantial gain over 
direct Monte Carlo (compare with Figure~\ref{fig_optimization_data}). With $M = 16$ bins 
we resolve the regions between superbasins,
further reducing the variance. 
The bins we use are exactly the ones in Figure~\ref{fig_bin_boundaries}. 

\begin{figure}
\includegraphics[width=13cm]
{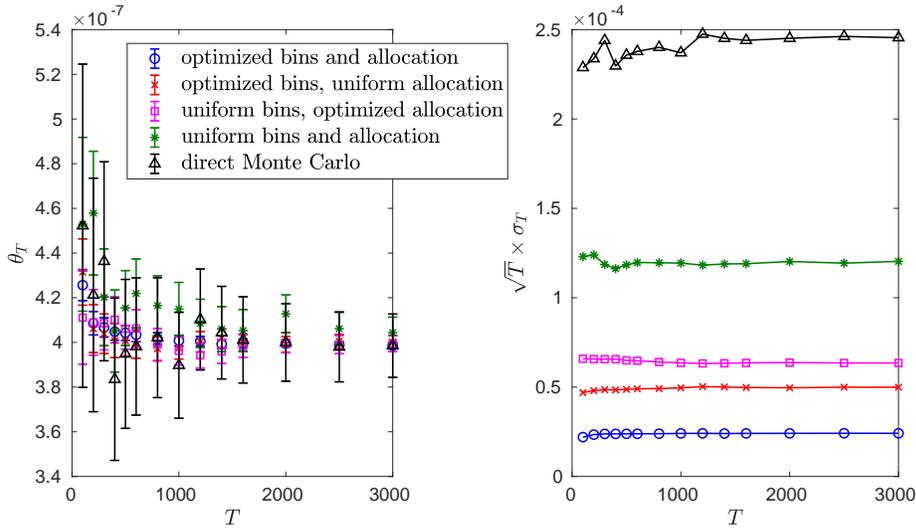}
\vskip-10pt
\caption{Comparison of direct Monte 
Carlo with weighted ensemble, where 
the bins and/or allocation are 
optimized, $M = 4$, and $N = 40$. Left: 
Weighted ensemble running means $\theta_T$ 
vs. $T$ for Algorithm~\ref{alg1} 
with the indicated 
choices for allocation 
and binning. Values 
shown are averages over 
$10^5$ independent 
trials. Error bars 
are $\sigma_T/\sqrt{10^5}$, where 
$\sigma_T$ are the empirical 
standard deviations. Right: 
Scaled empirical standard 
deviations $\sqrt{T}\times \sigma_T$ 
vs. $T$ in the same setup.
By optimized bins, 
we mean the weighted ensemble 
bins 
${\mathcal B}$ are chosen using 
Algorithm~\ref{alg_opt_bins} 
with $M = 4$. These optimized bins are 
exactly the ones plotted 
in top left of Figure~\ref{fig_bin_boundaries}.
Optimized allocation means 
the particle allocation follows 
Algorithm~\ref{alg_opt_allocation}. 
Uniform bins means that 
the bins are uniformly spaced on $[0,1]$, while 
uniform allocation 
means the particles are distributed 
uniformly among the occupied bins.}
\label{fig_optimization_data}
\end{figure}

In Figure~\ref{fig_optimization_data}, 
we compare weighted ensemble with 
direct Monte Carlo when $M = 4$. Direct 
Monte Carlo can be seen as a 
version of Algorithm~\ref{alg1} where
each parent always has exactly one child,
$C_t^i = 1$ for all $t,i$. For weighted ensemble, 
we consider optimizing either or 
both of the bins and the allocation. 
When we do not optimize the bins, 
we consider {\em uniform bins} 
${\mathcal B} = \{[0,40/120],[40/120,80/120],[80/120,1]\}.$ 
When we do not optimize the allocation, 
we consider {\em uniform allocation}, 
where we distribute particles evenly 
among the occupied bins, 
$N_t(u) \approx N/\#\{u\in {\mathcal B}:\omega_t(u)>0\}$.  Notice the order of magnitude 
reduction in standard deviation compared 
with direct Monte Carlo for this 
relatively small number of bins.

In 
Figure~\ref{fig_MFPT}, to 
illustrate the Hill relation, we 
plot the weighted ensemble 
estimates of the mean first 
passage time from our data in Figures~\ref{fig_vary_bins} 
and~\ref{fig_optimization_data} 
against the (numerically) exact value. 
The weighted ensemble estimates at small $T$ tend to 
exhibit a ``bias'' because the weighted 
ensemble has not yet reached steady state. 
As $T$ grows, this bias vanishes and 
the weighted ensemble estimates converge 
to the true mean first passage time. 
In the simple example in this section, we can 
directly compute the mean first passage time. For complicated biological problems, 
however, the 
first passage time can 
be so large that it is difficult to 
directly sample even once. 
In spite of this, the Hill relation reformulation
can lead to useful estimates on much 
shorter time scales than 
the mean first passage time \cite{adhikari2019computational,jeremy2,danonline}. 
Indeed this is the case in Figure~\ref{fig_MFPT}, 
where we get accurate estimates 
in a time horizon orders of magnitude 
smaller than 
the mean first passage time. 
This speedup 
can be attributed in part to 
the initialization in Algorithm~\ref{alg1}. 
In general, there can also be 
substantial speedup from the Hill relation itself, 
independently of the initial 
condition~\cite{danonline}.

Optimizing 
the bins and allocation together 
has the best performance. 
We expect that, as in this 
numerical example,
when the number $M$ of bins is 
relatively small,
optimizing the bins 
can be more important than optimizing 
the allocation.  
Optimizing only the allocation 
may lead to a less dramatic 
gain when the bins poorly resolve 
the landscape defined by $h$. 
On the other hand, for a large enough number 
$M$ of bins it may be sufficient 
just to optimize the allocation. 
We emphasize that {\em more bins 
is not always better}, 
since for a fixed number $N$ of 
particles, with too many bins we 
end up recovering direct Monte Carlo. See Remark~\ref{rmk_bins} 
above.

\begin{figure}
\includegraphics[width=13cm]
{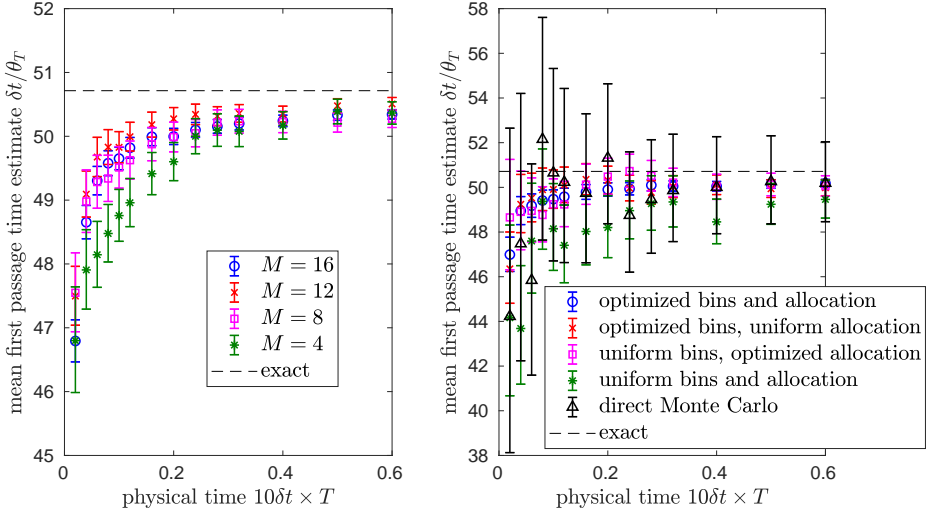}
\vskip-10pt
\caption{Illustration of the Hill relation for estimating the mean first passage time via~\eqref{MFPT2}. 
Left: the same data as in Figure~\ref{fig_vary_bins}, but $\theta_T$ is inverted and multiplied 
by $\delta t$ to estimate the 
mean first passage time, and error 
bars are adjusted accordingly.
Right: the same data as in Figure~\ref{fig_optimization_data}, but $\theta_T$ 
is inverted and multiplied 
by $\delta t$ to estimate the 
mean first passage time, and error 
bars are adjusted accordingly. 
So that the $x$ and $y$ axes have 
the same units, we consider 
the ``physical time'' on the $x$-axis, 
defined as the total number, $T$, of 
selection steps of Algorithm~\ref{alg1}
multiplied by the time, $10\delta t$, between 
selection steps (see~\eqref{between}). In 
both plots, the ``exact'' value 
of the mean first passage time 
is obtained from $10^5$ independent 
samples of the first passage 
time $\bar{\tau}$. As in Figures~\ref{fig_vary_bins} and~\ref{fig_optimization_data}, 
optimizing the bins and allocation 
has the best performance, 
and $M = 16$ bins is better than 
$M=4,8,12$ bins, though more 
bins is not always better; see Remark~\ref{rmk_bins}.
}
\label{fig_MFPT}
\end{figure}

\section{Remarks and future work}\label{sec:remarks}

This work presents new procedures for 
choosing the bins and particle 
allocation for weighted ensemble, 
building in part from ideas in~\cite{aristoff2016analysis}. 
The bins and particle allocation, 
together with the resampling 
times, completely characterize 
the method. Though we do not 
try to optimize the latter, we argue that 
there is no significant variance 
cost associated with taking 
small resampling times. 
Optimizing weighted 
ensemble is worthwhile when 
the optimized parameter choices 
lead to significantly more particles 
in important regions of state 
space, compared to a naive method. 
The corresponding gain, represented by the 
rule of thumb~\eqref{gain}, can 
be expected to grow as the dimension 
increases. This is because in high dimension 
the pathways between metastable sets are 
narrow compared to the 
vast state space (see~\cite{vanden2005transition,weinan2010transition} and 
the references in the Introduction).

Though our interest is 
in steady state calculations, 
many of our ideas could just 
as well be applied to a finite 
time setup. Our practical interest in 
steady state arises from the 
computation of mean first 
passage times via the Hill 
relation. On the mathematical 
side, general 
importance sampling
particle methods with 
the unbiased property of 
Theorem~\ref{thm_unbiased} 
are often not appropriate for 
steady state sampling. 
(By general methods, we 
mean methods for {\em nonreversible} Markov 
chains, or Markov chains 
with a steady state that is {\em not 
known up to a normalization 
factor}~\cite{lelievre2016partial}.) Indeed as we explain 
in our companion article~\cite{aristoff2019ergodic}, 
standard sequential Monte 
Carlo methods can suffer from 
variance explosion at large times,
even for carefully designed 
resampling steps. 
So this article could be 
an important 
advance in that direction.

We conclude by discussing some 
open problems. The implementation of 
the algorithms in this article 
to complex, high dimensional 
problems will require substantial 
effort and 
modification to the software in~\cite{westpa}.
The aim of this article is to 
lay the groundwork for such 
applications. On a more theoretical 
note, there remain some questions 
regarding parameter optimization. 
We could consider adaptive/non-spatial 
bins instead of fixed 
bins as in Algorithm~\ref{alg_opt_bins}, 
for instance bins chosen via $k$-means 
on the values of $Kh$ of the $N$ particles.
Also,  
we choose a number $M$ of weighted 
ensemble bins and number $N$ 
of particles {\em a priori}. It remains open 
how to pick the best value of 
$M$ for a given number, $N$, of 
particles, and fixed computational 
resources. Using only the variance 
analysis above, a straightforward 
answer to this question 
can probably only be obtained in 
the large $N$ asymptotic regime. 
More analysis is then needed, 
since we generally have 
small $N$.
We could also optimize over 
both $N$ 
and $M$, or over $N$, 
$M$ and a total number $S$
of independent simulations, 
subject to a fixed
computational budget. We leave 
these questions to future work.

\section*{Acknowledgements}

D. Aristoff thanks Peter Christman, 
Tony Leli{\`e}vre, Josselin Garnier, Matthias Rousset,  
Gabriel Stoltz, and Robert J. Webber for helpful suggestions and 
discussions. D. Aristoff and 
D.M. Zuckerman especially thank 
Jeremy Copperman and Gideon Simpson 
for many ongoing discussions related 
to this work.

This work was supported by the National Science Foundation via the awards DMS-1818726 and DMS-1522398 
(for author 
D. Aristoff)
and by the National Institutes of Health via the award GM115805 (for author D.M. Zuckerman).

\section{Appendix}\label{sec:appendix}

In this appendix, we describe residual 
resampling, and we draw a connection 
between mutation variance minimization 
and sensitivity.

\subsection{Residual resampling}

Recall that in Algorithm~\ref{alg1} we 
assumed residual resampling is used 
to get the number 
of children $(C_t^j)^{j=1,\ldots,N}$
of each particle $\xi_t^j$ at each time $t$. 
We could also use residual resampling to 
compute $R_t(u)^{u \in {\mathcal B}}$ in 
Algorithm~\ref{alg_opt_allocation}.
For the reader's convenience, we describe 
residual resampling in Algorithm~\ref{alg_residual}. Recall $\lfloor x \rfloor$ is the floor function (the greatest integer $\le x$).

\begin{algorithm}\caption{Residual resampling}

To generate $n$ samples $\{N_i:i \in {\mathcal I}\}$ from a distribution $\{d_i: i \in {\mathcal I}\}$ with $\sum_{i \in {\mathcal I}}d_i = 1$:
\begin{itemize}[leftmargin=20pt]
\item Define $\delta_i = nd_i - \lfloor nd_i \rfloor$ and let $\delta = \sum_{i \in {\mathcal I}}\delta_i$.
\item Sample 
$\{R_i: i \in {\mathcal I}\}$ from the multinomial 
distribution 
with $\delta$ trials and event probabilities 
$\delta_i/\delta$, $i \in {\mathcal I}$.
In detail, 
\begin{align*}
&{\mathbb P}(R_i = r_i, \,i\in {\mathcal I}) =
\mathbbm{1}_{\sum_{i \in {\mathcal I}} r_i = \delta} \frac{\delta!}{\prod_{i \in {\mathcal I}} r_i!}\prod_{i \in {\mathcal I}} (\delta_i/\delta)^{r_i}.
\end{align*}
\item Define $
N_i = \lfloor nd_i\rfloor + R_i$, $i \in {\mathcal I}$.
\end{itemize}
\label{alg_residual}
\end{algorithm}

\subsection{Intuition behind mutation variance minimization}\label{sec:int_mut}

The goal of this section is to give some intuition 
for the strategy~\eqref{Ntu}, which chooses 
the particle allocation to minimize 
mutation variance, by introducing a 
connection with sensitivity of the stationary 
distribution $\mu$ to perturbations of $K$. {\em In this section we make the 
simplifying assumption that state 
space is finite, and each point of 
state space is a microbin.}
Thus $K = (K_{pq})_{p,q \in {\mathcal M}{\mathcal B}}$ is a 
finite stochastic matrix. The 
Poisson solution $h$ defined in~\eqref{h}
satisfies $(I-K)h = f - (f^T \mu)\mathbbm{1}$ and $h^T \mu = 0$, or equivalently
\begin{equation}\label{poiss}
h = \sum_{s=0}^\infty K^s \bar{f}, \qquad \bar{f} = f - (f^T \mu)\mathbbm{1}.
\end{equation}
where $h = (h_p)_{p \in {\mathcal M}{\mathcal B}}$, $f = (f_p)_{p \in {\mathcal M}{\mathcal B}}$, $\bar{f} = (\bar{f}_p)_{p \in {\mathcal M}{\mathcal B}}$, and $\mu = (\mu_p)_{p \in {\mathcal M}{\mathcal B}}$ are column 
vectors, and $\mathbbm{1} = \sum_{p \in {\mathcal M}{\mathcal B}} e_p$ is the all ones column vector. Here, $I$ is the identity matrix, and $(e_p)_{p \in {\mathcal M}{\mathcal B}}$ is the column vector with $1$ in the $p$th entry and $0$'s elsewhere. 

We write $\mu(Q)$ for the stationary 
distribution of an irreducible 
stochastic matrix $Q = (Q_{pq})_{p,q \in {\mathcal M}{\mathcal B}}$. More 
precisely, $\mu(Q)$ denotes a continuously differentiable 
extension of $Q \mapsto \mu(Q)$ to 
an open neighborhood of the space 
of $\#{\mathcal M}{\mathcal B}\times \#{\mathcal M}{\mathcal B}$ irreducible stochastic 
matrices which satisfies $\mu(Q)^T Q = \mu(Q)^T$ 
and $\mu(Q)^T \mathbbm{1} = 1$ whenever $Q\mathbbm{1} = \mathbbm{1}$. See~\cite{thiede2015sharp} 
for details and a proof of existence of this extension.  
Abusing notation, we still write $\mu$ 
with no matrix argument to denote the stationary distribution $\mu(K)$ of $K$. 

\begin{theorem}\label{thm_pert}
Let $\lambda(u)>0$ 
be such that $\sum_{u\in {\mathcal B}}\lambda(u) = 1$. For each $u \in {\mathcal B}$, let $\nu(u) = (\nu(u)_p)_{p \in {\mathcal M}{\mathcal B}}$ be a vector satisfying 
$\nu(u)_p \ge 0$ for $p \in {\mathcal M}{\mathcal B}$, $\nu(u)_p = 0$ for $p \notin u$, and $\sum_{p \in u}\nu(u)_p = 1$. Let $A(u)$ be a random 
matrix with the distribution 
\begin{equation}\label{dist_A}
{\mathbb P}\left(A(u) =  e_p e_q^T- e_pe_p^T K\right) = \nu(u)_pK_{pq}, \qquad \text{if }p \in u, \,q \in {\mathcal M}{\mathcal B}.
\end{equation}
Let $A(u)^{(n)}$ be independent copies of $A$, for $u \in {\mathcal B}$ and $n = 1,2,\ldots$. Define 
\begin{equation}\label{BN}
B^{(N)} = \sqrt{N}\sum_{u \in {\mathcal B}} \frac{1}{\lfloor N \lambda(u)\rfloor}\sum_{n=1}^{\lfloor N\lambda(u)\rfloor} A(u)^{(n)}.
\end{equation}
Then 
\begin{align}\begin{split}\label{sensitivity}
&\lim_{N \to \infty}  {\mathbb E}\left[\left(\left.\frac{d}{d\epsilon}\mu(K+\epsilon B^{(N)})^T\right|_{\epsilon = 0}f \right)^2\right]  \\
&\qquad=  \sum_{u \in {\mathcal B}}\lambda(u)^{-1}\sum_{p \in u} \nu(u)_p\mu_p^2\left[(Kh^2)_p - (Kh)_p^2\right].
\end{split}
\end{align}
\end{theorem}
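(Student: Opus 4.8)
The plan is to reduce the statement to a single sensitivity identity for the stationary distribution, after which only an elementary second-moment computation remains. First I would record that for \emph{any} deterministic matrix $B$ with $B\mathbbm{1} = 0$, the derivative $\dot\mu^T := \left.\frac{d}{d\epsilon}\mu(K+\epsilon B)^T\right|_{\epsilon=0}$ satisfies
\begin{equation*}
\dot\mu^T(I-K) = \mu^T B, \qquad \dot\mu^T\mathbbm{1} = 0.
\end{equation*}
These follow by differentiating at $\epsilon = 0$ the relations $\mu(K+\epsilon B)^T(K+\epsilon B) = \mu(K+\epsilon B)^T$ and $\mu(K+\epsilon B)^T\mathbbm{1} = 1$, which hold along the whole segment since $(K+\epsilon B)\mathbbm{1} = \mathbbm{1}$; the differentiation is legitimate by the smoothness of the extension $Q\mapsto\mu(Q)$ from~\cite{thiede2015sharp} and the fact that $K+\epsilon B$ lies near the irreducible stochastic matrix $K$ for $\epsilon$ near $0$. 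Pairing with $f$, writing $f = \bar f + (f^T\mu)\mathbbm{1}$ with $\bar f = f - (f^T\mu)\mathbbm{1}$, and using the Poisson equation $(I-K)h = \bar f$, $h^T\mu = 0$ then gives the compact formula
\begin{equation*}
\left.\frac{d}{d\epsilon}\mu(K+\epsilon B)^T\right|_{\epsilon=0} f = \dot\mu^T\bar f = \dot\mu^T(I-K)h = \mu^T B h,
\end{equation*}
where the first equality uses $\dot\mu^T\mathbbm{1} = 0$.

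Next I would check that the random matrices $A(u)$ from~\eqref{dist_A}, and hence $B^{(N)}$ in~\eqref{BN}, are admissible for this identity. On the atom $A(u) = e_pe_q^T - e_pe_p^T K$ one has $A(u)\mathbbm{1} = e_p - e_p = 0$ (since $e_p^T K\mathbbm{1} = 1$), so $B^{(N)}\mathbbm{1} = 0$ surely. Moreover $\sum_q K_{pq}\,e_pe_q^T = e_pe_p^T K$, so $\mathbb{E}[A(u)] = \sum_{p\in u}\nu(u)_p\big(e_pe_p^T K - e_pe_p^T K\big) = 0$ and $\mathbb{E}[B^{(N)}] = 0$. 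Applying the formula above pathwise with $B = B^{(N)}$, the integrand in~\eqref{sensitivity} equals $(\mu^T B^{(N)}h)^2$, and
\begin{equation*}
\mu^T B^{(N)} h = \sqrt N\sum_{u\in{\mathcal B}}\frac{1}{\lfloor N\lambda(u)\rfloor}\sum_{n=1}^{\lfloor N\lambda(u)\rfloor} X(u)^{(n)}, \qquad X(u)^{(n)} := \mu^T A(u)^{(n)} h,
\end{equation*}
where, for each $u$, the $X(u)^{(n)}$ are i.i.d.\ copies of $X(u) := \mu^T A(u) h$, each mean zero, and independent across $u$.

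Then I would take expectations. Distinct summands being independent and mean zero, all cross terms vanish and one obtains the exact identity $\mathbb{E}[(\mu^T B^{(N)}h)^2] = \sum_{u\in{\mathcal B}} \tfrac{N}{\lfloor N\lambda(u)\rfloor}\,\textup{Var}(X(u))$ for every $N$. Evaluating $X(u)$ on the atom $A(u) = e_pe_q^T - e_pe_p^T K$ gives $\mu^T A(u) h = \mu_p\big(h_q - (Kh)_p\big)$, so
\begin{equation*}
\textup{Var}(X(u)) = \sum_{p\in u}\nu(u)_p\,\mu_p^2\sum_q K_{pq}\big(h_q - (Kh)_p\big)^2 = \sum_{p\in u}\nu(u)_p\,\mu_p^2\big[(Kh^2)_p - (Kh)_p^2\big],
\end{equation*}
the last step being the one-line expansion $\sum_q K_{pq}(h_q - (Kh)_p)^2 = (Kh^2)_p - 2(Kh)_p^2 + (Kh)_p^2$. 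Letting $N\to\infty$ so that $N/\lfloor N\lambda(u)\rfloor\to\lambda(u)^{-1}$ in this finite sum yields~\eqref{sensitivity}.

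The main obstacle is really only the first step: rigorously justifying that $\epsilon\mapsto\mu(K+\epsilon B)$ is differentiable and extracting the clean form $\mu^T B h$. Once the cited smooth extension of $Q\mapsto\mu(Q)$ is invoked and one notes that the perturbation remains on the affine slice of row-stochastic matrices, this is routine. Everything after that is bookkeeping with the explicit law~\eqref{dist_A}; in particular no central limit theorem is needed, since the relevant second moment is computed exactly for each $N$ and only a deterministic limit $N\to\infty$ is taken at the end.
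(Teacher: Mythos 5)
Your proposal is correct and takes essentially the same route as the paper's proof: the sensitivity identity $\left.\frac{d}{d\epsilon}\mu(K+\epsilon B)^T\right|_{\epsilon=0}f=\mu^{T}Bh$ for row-sum-zero perturbations, vanishing cross terms via independence and mean zero, the per-bin second moment $\sum_{p\in u}\nu(u)_p\mu_p^2\left[(Kh^2)_p-(Kh)_p^2\right]$, and the deterministic limit $N/\lfloor N\lambda(u)\rfloor\to\lambda(u)^{-1}$. The only cosmetic differences are that you pair $\dot\mu^{T}$ directly with $(I-K)h$ rather than solving for $\dot\mu$ via the series representation of the Poisson solution, and you apply the derivative formula once to $B^{(N)}$ and reduce to a scalar i.i.d.\ variance computation instead of first splitting the derivative by linearity as in the paper's auxiliary lemmas.
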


We now interpret Theorem~\ref{thm_pert} 
from the point of view of particle 
allocation. Note that $A(u)$ 
is a centered sample of
of $K$ obtained by picking 
a microbin $p \in u$ according to the 
distribution $\nu(u)$, 
and then simulating a transition 
from $p$ via $K$. By a centered sample, 
we mean that we adjust $A(u)$ by 
subtracting by its mean, so that $A(u)$ has mean zero. 
The mean of $[(d/d\epsilon)\mu(K+\epsilon A(u))^T|_{\epsilon = 0}f]^2$ measures the sensitivity of $\int f\,d\mu = f^T\mu$ 
to sampling from bin $u$ according to the 
distribution $\nu(u)$. Similarly, the mean of $[(d/d\epsilon)\mu(K+\epsilon B^{(N)})^T|_{\epsilon = 0}f]^2$ 
measures the sensitivity of $\int f\,d\mu$ 
corresponding to sampling from each 
bin $u \in {\mathcal B}$ exactly
$\lfloor N\lambda(u)\rfloor$ times according 
to the distributions $\nu(u)$. Appropriately 
normalizing the latter in the 
limit $N \to \infty$ leads to~\eqref{sensitivity}.
The 
sensitivity in~\eqref{sensitivity} is minimized over $\lambda(u)^{u \in {\mathcal B}}$ when
\begin{equation}\label{Ntu2}
N\lambda(u) = \dfrac{N\sqrt{\sum_{p \in u} \nu(u)_p\mu_p^2\left[(Kh^2)_p - (Kh)_p^2\right]}}{\sum_{u \in {\mathcal B}}\sqrt{\sum_{p \in u} \nu(u)_p\mu_p^2\left[(Kh^2)_p - (Kh)_p^2\right]}}.
\end{equation}
 
In light of the discussion above, we think of $N\lambda(u)^{u \in {\mathcal B}}$ in~\eqref{Ntu2} as a particle 
allocation. Note that this resembles 
our formula~\eqref{Ntu} for the optimal 
weighted ensemble particle allocation.
We now try to make a connection between~\eqref{Ntu} and~\eqref{Ntu2}.  

We 
first consider a simple case. 
Suppose the bins are equal to the microbins, ${\mathcal M}{\mathcal B} = {\mathcal B}$. 
Since we assume here that every point in 
state space is a microbin, this means that every point of state space 
is also a bin. In particular the 
distributions $\nu(u)$ are trivial: $\nu(u)_p =1$ whenever $p \in {\mathcal M}{\mathcal B}$ with $p = u$. To 
make~\eqref{Ntu2} agree with~\eqref{Ntu}, 
we need $\mu_p = \omega_t(u)$ when 
$p =u$. Of 
course this equality does not hold. 
It is true that  
$\mu_p \approx \omega_t(u)$ when $p=u$ is 
a reasonable approximation, 
{\em but only in the asymptotic where $N,t \to \infty$.}
To see why this is so, 
note that the 
unbiased property and ergodicity 
show that $\lim_{t \to \infty} {\mathbb E}[\omega_t(u)] = \mu_p$ when $p=u$; 
provided an appropriate law 
of large numbers also holds, $\lim_{t \to \infty} \lim_{N \to \infty} \omega_t(u) = \mu_p$. 
Recall, though, that 
we are interested in relatively 
small $N$, due to the high cost of 
particle evolution.

For the general case, 
where ${\mathcal M}{\mathcal B} \ne {\mathcal B}$ and bins contain multiple points in state space, we see no direct connection between the allocation 
formulas~\eqref{Ntu2} 
and~\eqref{Ntu}. Indeed, to 
make an analogy between this sensitivity 
calculation and weighted ensemble, 
we should have $\nu(u)_p = \sum_{i:\xi_t^i \in p}\omega_t^i /\omega_t(u)$. Or in other 
words, we should consider perturbations 
that correspond to sampling from the bins 
according to particle weights, 
in accordance with Algorithm~\ref{alg1}.
But putting $\nu(u)_p = \sum_{i:\xi_t^i \in p}\omega_t^i /\omega_t(u)$ in~\eqref{Ntu2} gives 
something quite different from~\eqref{Ntu}. 
So while the sensitivity minimization formula~\eqref{Ntu2} is qualitatively similar to our mutation 
variance minimization formula~\eqref{Ntu}, the 
two are not actually the same.

\subsection{Proof of Theorem~\ref{thm_pert}}
\label{sec:appendix_proof}

We begin by noting the following.
\begin{equation}\label{b_poisson} 
\text{If }v^T \mathbbm{1} = 0, \quad \text{then}\quad g^T(I-K) = v^T, \,\,g^T \mathbbm{1} = 0\quad \Longleftrightarrow \quad g^T =\sum_{s=0}^\infty v^TK^s. 
\end{equation}
Like~\eqref{poiss}, this follows from ergodicity of $K$. See for instance~\cite{golub1986using}.

\begin{lemma}\label{lem1}
Suppose $A$ is a matrix with $A\mathbbm{1} = 0$. Then 
\begin{equation*}
\left.\frac{d}{d\epsilon}\mu(K + \epsilon A)\right|_{\epsilon = 0}f = 
 \mu^T A h.
\end{equation*}
\end{lemma}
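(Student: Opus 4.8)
The plan is to differentiate the two identities that define the stationary vector and then use the Poisson equation~\eqref{poiss} to recognize $h$. Write $\mu(\epsilon) := \mu(K+\epsilon A)$. Since $A\mathbbm{1} = 0$ we have $(K+\epsilon A)\mathbbm{1} = K\mathbbm{1} + \epsilon A\mathbbm{1} = \mathbbm{1}$ for every $\epsilon$, so by the defining property of the continuously differentiable extension $Q \mapsto \mu(Q)$ (from~\cite{thiede2015sharp}) both $\mu(\epsilon)^T(K+\epsilon A) = \mu(\epsilon)^T$ and $\mu(\epsilon)^T\mathbbm{1} = 1$ hold for all $\epsilon$ in a neighborhood of $0$, and $\epsilon \mapsto \mu(\epsilon)$ is differentiable at $0$. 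Set $\dot\mu := \frac{d}{d\epsilon}\mu(\epsilon)\big|_{\epsilon = 0}$.

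First I would differentiate the normalization $\mu(\epsilon)^T\mathbbm{1} = 1$ at $\epsilon = 0$, which gives $\dot\mu^T\mathbbm{1} = 0$. Next, differentiating the fixed-point identity $\mu(\epsilon)^T(K+\epsilon A) = \mu(\epsilon)^T$ at $\epsilon = 0$ by the product rule gives $\dot\mu^T K + \mu^T A = \dot\mu^T$, i.e.\ $\dot\mu^T(I-K) = \mu^T A$.

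Then I would pair these two facts with the Poisson solution. Multiplying $(I-K)h = \bar f = f - (f^T\mu)\mathbbm{1}$ on the left by $\dot\mu^T$ yields $\dot\mu^T(I-K)h = \dot\mu^T f - (f^T\mu)\,\dot\mu^T\mathbbm{1} = \dot\mu^T f$, using $\dot\mu^T\mathbbm{1} = 0$. On the other hand, $\dot\mu^T(I-K)h = (\mu^T A)h = \mu^T A h$. Combining the two gives $\dot\mu^T f = \mu^T A h$, which is the assertion of the lemma (here $\mu(K+\epsilon A)^T f$ is understood as the scalar quantity being differentiated).

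The only genuine obstacle is justifying the differentiability of $Q \mapsto \mu(Q)$ and hence the legitimacy of differentiating the two implicit identities; both are supplied by the smooth extension of~\cite{thiede2015sharp}, and everything else is a two-line linear-algebra manipulation. As an alternative that sidesteps implicit differentiation, one could instead invoke~\eqref{b_poisson}: since $\mu^T A\mathbbm{1} = 0$, the relations $\dot\mu^T(I-K) = \mu^T A$ and $\dot\mu^T\mathbbm{1} = 0$ force $\dot\mu^T = \sum_{s \ge 0}\mu^T A K^s$, and then $\dot\mu^T f = \sum_{s\ge 0}\mu^T A K^s\bar f = \mu^T A h$, using $K^s\mathbbm{1} = \mathbbm{1}$ to discard the $(f^T\mu)\mathbbm{1}$ term.
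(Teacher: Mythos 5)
Your proposal is correct and follows essentially the same route as the paper: differentiate the fixed-point identity $\mu(\epsilon)^T(K+\epsilon A)=\mu(\epsilon)^T$ and the normalization $\mu(\epsilon)^T\mathbbm{1}=1$ to get $\dot\mu^T(I-K)=\mu^T A$ and $\dot\mu^T\mathbbm{1}=0$, then combine with the Poisson equation. Your main argument pairs $\dot\mu^T$ directly against $(I-K)h=\bar f$, which is a slightly cleaner final step than the paper's detour through the series representation~\eqref{b_poisson}; the alternative you sketch at the end is exactly the paper's proof.
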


\begin{proof}
Since $\mu(K+\epsilon A)^T(K+\epsilon A) = \mu(K+\epsilon A)^T$, we have 
\begin{align*}
0 = \left.\frac{d}{d\epsilon}\mu(K+\epsilon A)^T(I-K-\epsilon A)\right|_{\epsilon = 0} = 
\left.\frac{d}{d\epsilon}\mu(K+\epsilon A)^T\right|_{\epsilon = 0}(I-K) - \mu^T A.
\end{align*}
Thus 
\begin{equation}\label{above_1}
\left.\frac{d}{d\epsilon}\mu(K+\epsilon A)^T\right|_{\epsilon = 0}(I-K) =\mu^TA.
\end{equation}
Moreover since $\mu(Q)^T\mathbbm{1} = 1$ 
for any stochastic matrix $Q$, 
\begin{equation}\label{above_2}
0 = \left.\frac{d}{d\epsilon}\mu(K+\epsilon A)^T\mathbbm{1}\right|_{\epsilon = 0}\\
= \left.\frac{d}{d\epsilon}\mu(K+\epsilon A)^T\right|_{\epsilon = 0}\mathbbm{1}.
\end{equation}
Now by~\eqref{poiss},~\eqref{b_poisson},~\eqref{above_1}, and~\eqref{above_2}, 
\begin{align*}
\left.\frac{d}{d\epsilon}\mu(K+\epsilon A)^T\right|_{\epsilon = 0}f &= \sum_{s=0}^\infty \mu^TA K^sf \\
&= \mu^T A \sum_{s=0}^\infty K^s\bar{f} 
= \mu^T A h,
\end{align*}
where the last line above uses $AK^s \mathbbm{1} = A\mathbbm{1} = 0$ to replace $f$ with $\bar{f} = f - \mu^T f\mathbbm{1}$.
\end{proof}

\begin{lemma}\label{lem2}
Suppose $(A^{(n)})^{n=1,\ldots,N}$ are matrices such 
that {(i)} $(A^{(n)})^{n=1,\ldots,N}$ are independent over $n$,
{(ii)} $A^{(n)}\mathbbm{1} = 0$ for all $n$, 
and {(iii)} 
${\mathbb E}[A^{(n)}] = 0$ for all $n$.
Then 
\begin{align*}
{\mathbb E}\left[\left(\frac{d}{d\epsilon}\left.\mu\left(K + \epsilon \sum_{n=1}^N A^{(n)}\right)^T\right|_{\epsilon = 0}f\right)^2\right] =
\sum_{n=1}^N {\mathbb E}\left[\left(\frac{d}{d\epsilon}\left.\mu\left(K + \epsilon A^{(n)}\right)^T \right|_{\epsilon = 0}f\right)^2\right].
\end{align*}
\end{lemma}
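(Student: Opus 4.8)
The plan is to reduce everything to Lemma~\ref{lem1} and then use bilinearity together with independence. First I would note that $\big(\sum_{n=1}^N A^{(n)}\big)\mathbbm{1}=0$, since $A^{(n)}\mathbbm{1}=0$ for each $n$; hence Lemma~\ref{lem1} applies both to the sum and to each summand. Applying it to the sum gives
\[
\left.\frac{d}{d\epsilon}\mu\!\left(K+\epsilon\sum_{n=1}^N A^{(n)}\right)^{\!T}\right|_{\epsilon=0}f=\mu^T\!\left(\sum_{n=1}^N A^{(n)}\right)\!h=\sum_{n=1}^N \mu^T A^{(n)} h,
\]
while applying it to each $A^{(n)}$ identifies $\mu^T A^{(n)} h$ with $\frac{d}{d\epsilon}\mu(K+\epsilon A^{(n)})^T|_{\epsilon=0}f$. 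In other words, the functional $A\mapsto \frac{d}{d\epsilon}\mu(K+\epsilon A)^T|_{\epsilon=0}f$ is additive on the subspace $\{A:A\mathbbm{1}=0\}$, which is exactly the linear decomposition we need before squaring.

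Next I would square the scalar $\sum_{n}\mu^T A^{(n)} h$ and take expectations:
\[
{\mathbb E}\!\left[\Big(\sum_{n=1}^N \mu^T A^{(n)} h\Big)^{\!2}\right]=\sum_{n=1}^N {\mathbb E}\!\left[(\mu^T A^{(n)} h)^2\right]+\sum_{\substack{m,n=1\\ m\ne n}}^N {\mathbb E}\!\left[(\mu^T A^{(n)} h)(\mu^T A^{(m)} h)\right].
\]
Each $\mu^T A^{(n)} h$ is a real number, so for $m\ne n$ independence of $A^{(n)}$ and $A^{(m)}$ factorizes the cross term as ${\mathbb E}[\mu^T A^{(n)} h]\,{\mathbb E}[\mu^T A^{(m)} h]$, and ${\mathbb E}[A^{(n)}]=0$ gives ${\mathbb E}[\mu^T A^{(n)} h]=\mu^T{\mathbb E}[A^{(n)}]h=0$. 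Hence every cross term vanishes, and re-identifying the diagonal terms through Lemma~\ref{lem1} yields the claimed identity.

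The only point that needs attention, rather than a genuine obstacle, is ensuring the smooth extension $Q\mapsto\mu(Q)$ from the paragraph preceding Theorem~\ref{thm_pert} is evaluated where it is defined: for $|\epsilon|$ small enough, $K+\epsilon\sum_n A^{(n)}$ and each $K+\epsilon A^{(n)}$ lie in the neighborhood on which $\mu(\cdot)$ is $C^1$, so the $\epsilon=0$ derivatives exist and Lemma~\ref{lem1} applies. Everything else is finite-dimensional linear algebra plus the elementary fact that independent mean-zero scalars are uncorrelated; in particular no interchange of limits or integrals is required, which is why I expect this lemma to be short.
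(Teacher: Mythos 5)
Your proposal is correct and follows essentially the same route as the paper: decompose the derivative linearly over the $A^{(n)}$, then kill the cross terms using independence together with the mean-zero property (via Lemma~\ref{lem1}). The only cosmetic difference is that you obtain the additivity of the derivative by applying Lemma~\ref{lem1} to the sum $\sum_n A^{(n)}$ (getting the explicit form $\mu^T A h$), whereas the paper invokes continuous differentiability of $Q\mapsto\mu(Q)$ directly; the two justifications are interchangeable here.
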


\begin{proof}
Since $\mu$ is continuously differentiable, 
\begin{equation}\label{disp1}
\frac{d}{d\epsilon}\left.\mu\left(K + \epsilon \sum_{n=1}^N A^{(n)}\right)^T\right|_{\epsilon = 0}f
= \sum_{n=1}^N \frac{d}{d\epsilon}\left.\mu\left(K + \epsilon A^{(n)}\right)^T \right|_{\epsilon = 0}f.
\end{equation}
By  {\em (ii)-(iii)} and Lemma~\ref{lem1}, $
{\mathbb E}\left[\frac{d}{d\epsilon}\left.\mu\left(K + \epsilon A^{(n)}\right)^T \right|_{\epsilon = 0}f\right] = 0$ for all $n$.
So by {\em (i)},
\begin{equation}\label{disp2}
{\mathbb E}\left[\frac{d}{d\epsilon}\left.\mu\left(K + \epsilon A^{(n)}\right)^T \right|_{\epsilon = 0}f\frac{d}{d\epsilon}\left.\mu\left(K + \epsilon A^{(m)}\right)^T \right|_{\epsilon = 0}f\right] = 0, \qquad \text{if }n \ne m.
\end{equation}
The result follows by combining~\eqref{disp1} and~\eqref{disp2}.
\end{proof}

\begin{lemma}\label{lem3}
Let $A(u)$ be a random matrix with the distribution~\eqref{dist_A}.
Then
\begin{equation*}
{\mathbb E}\left[\left(\left.\frac{d}{d\epsilon}{\mu}(K + \epsilon A(u))^T\right|_{\epsilon = 0}f\right)^2\right] = \sum_{p \in u} \nu(u)_p\mu_p^2\left[(Kh^2)_p - (Kh)_p^2\right].
\end{equation*}
\end{lemma}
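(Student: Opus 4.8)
The plan is to reduce everything to Lemma~\ref{lem1} and then carry out an elementary computation with the explicit distribution~\eqref{dist_A}.

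First I would check that every realization of $A(u)$ satisfies the hypothesis of Lemma~\ref{lem1}, namely $A(u)\mathbbm{1} = 0$. Indeed, if $A(u) = e_p e_q^T - e_p e_p^T K$, then $A(u)\mathbbm{1} = e_p(e_q^T\mathbbm{1}) - e_p e_p^T (K\mathbbm{1}) = e_p - e_p = 0$, using $e_q^T\mathbbm{1} = 1$ and $K\mathbbm{1} = \mathbbm{1}$. Hence Lemma~\ref{lem1} applies and gives, for each realization,
\[
\left.\frac{d}{d\epsilon}\mu(K+\epsilon A(u))^T\right|_{\epsilon=0} f = \mu^T A(u) h.
\]

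Next I would evaluate $\mu^T A(u) h$ on the atom $A(u) = e_p e_q^T - e_p e_p^T K$: this equals $\mu_p\big(e_q^T h - e_p^T K h\big) = \mu_p\big(h_q - (Kh)_p\big)$. Squaring and taking the expectation over the distribution~\eqref{dist_A} --- which places mass $\nu(u)_p K_{pq}$ on this atom for $p \in u$ and $q \in {\mathcal M}{\mathcal B}$ --- gives
\[
{\mathbb E}\left[(\mu^T A(u) h)^2\right] = \sum_{p \in u}\nu(u)_p\,\mu_p^2 \sum_{q \in {\mathcal M}{\mathcal B}} K_{pq}\big(h_q - (Kh)_p\big)^2.
\]

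Finally, the inner sum over $q$ is exactly the conditional variance $\textup{Var}_{K(p,\cdot)}h$: expanding the square and using $\sum_q K_{pq} = 1$ together with $\sum_q K_{pq}h_q = (Kh)_p$ yields $\sum_q K_{pq}\big(h_q - (Kh)_p\big)^2 = (Kh^2)_p - (Kh)_p^2$. Substituting this back produces the claimed identity. I do not expect a real obstacle here; the only points requiring minor care are that the $C^1$ extension of $Q \mapsto \mu(Q)$ legitimizes differentiating before squaring and taking expectations (so that Lemma~\ref{lem1} can be applied realization-by-realization and then integrated), and the bookkeeping that the support of~\eqref{dist_A} is precisely $\{(p,q): p \in u,\ q \in {\mathcal M}{\mathcal B}\}$, so the outer sum is restricted to $p \in u$ as in the statement.
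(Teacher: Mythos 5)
Your proposal is correct and follows essentially the same route as the paper's proof: verify $A(u)\mathbbm{1}=0$, apply Lemma~\ref{lem1} to reduce the derivative to $\mu^T A(u)h$, and then compute the expectation over the atoms of~\eqref{dist_A}, expanding the square with $\sum_q K_{pq}=1$ and $\sum_q K_{pq}h_q=(Kh)_p$ to obtain $(Kh^2)_p-(Kh)_p^2$. No gaps.
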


\begin{proof}
Note that $A(u)\mathbbm{1} = (  e_p e_q^T-e_pe_p^T K)\mathbbm{1} = 0$ since $K\mathbbm{1} = \mathbbm{1}$.
So by Lemma~\ref{lem1},
\begin{align*}
\left.\frac{d}{d\epsilon}\mu\left(K+\epsilon A(u)\right)\right|_{\epsilon = 0}^T f = \mu^T A(u) h.
\end{align*}
From this and~\eqref{dist_A},
\begin{align*}
{\mathbb E}\left[\left(\frac{d}{d\epsilon}\left.{\mu}(K + \epsilon A(u))^T\right|_{\epsilon = 0}f\right)^2\right] &= \sum_{p \in u} \sum_{q \in {\mathcal M}{\mathcal B}}\nu(u)_pK_{pq}\left[\mu^T(e_p e_q^T-e_p e_p^T K)h\right]^2 \\
&= \sum_{p \in u}\nu(u)_p\mu_p^2\sum_{q \in {\mathcal M}{\mathcal B}}K_{pq}\left[(e_q^T-e_p^T K)h\right]^2 \\
&= \sum_{p \in u}\nu(u)_p\mu_p^2\sum_{q \in {\mathcal M}{\mathcal B}} K_{pq} \left[(Kh)_p^2 + h_q^2 - 2(Kh)_p h_q\right] \\
&= \sum_{p \in u}\nu(u)_p\mu_p^2\left[(Kh^2)_p - (Kh)_p^2\right].
\end{align*}
\end{proof}

We are now ready to prove Theorem~\ref{thm_pert}.

\begin{proof}[Proof of Theorem~\ref{thm_pert}]
 By Lemmas~\ref{lem2} and~\ref{lem3},
\begin{align}\begin{split}\label{lim}
&{\mathbb E}\left[\left(\frac{d}{d\epsilon}\left.\mu\left(K + \epsilon B^{(N)}\right)^T \right|_{\epsilon = 0}f\right)^2\right] \\
&= N\sum_{u \in {\mathcal B}}\frac{1}{\lfloor N\lambda(u)\rfloor^2}\sum_{n=1}^{\lfloor N \lambda(u)\rfloor}{\mathbb E}\left[\left(\frac{d}{d\epsilon}\left.\mu\left(K + \epsilon A(u)^{(n)}\right)^T \right|_{\epsilon = 0}f\right)^2 \right] \\
&= \sum_{u \in {\mathcal B}}\frac{N}{\lfloor N\lambda(u)\rfloor}\sum_{p \in u}\nu(u)_p\mu_p^2\left[(Kh^2)_p - (Kh)_p^2\right]. \end{split}
\end{align}
The result follows by letting $N \to \infty$.
\end{proof}

\end{document}